\documentclass[10pt]{amsart}

\usepackage[utf8]{inputenc}
\usepackage[T1]{fontenc}
\usepackage[pdftex]{graphicx}
\usepackage{latexsym}
\usepackage{amsmath,amssymb,amsthm,amsfonts}
\usepackage{mathtools}
\usepackage{enumitem}
\usepackage{faktor}
\usepackage{enumerate}
\usepackage{hyperref}
\usepackage{tikz}
\usepackage{xcolor}

\newtheorem{theorem}{Theorem}[section]
\newtheorem{corollary}[theorem]{Corollary}
\newtheorem{proposition}[theorem]{Proposition}
\newtheorem{lemma}[theorem]{Lemma}
\theoremstyle{definition}

\newtheorem*{remark}{Remark}

\newtheorem{problem}[theorem]{Problem}
\newtheorem{conjecture}[theorem]{Conjecture}

\newcommand{\K}{\mathcal{K}}

\newcommand{\Hmm}[1]{\leavevmode{\marginpar{\tiny%
			$\hbox to 0mm{\hspace*{-0.5mm}$\leftarrow$\hss}%
			\vcenter{\vrule depth 0.1mm height 0.1mm width \the\marginparwidth}%
			\hbox to 0mm{\hss$\rightarrow$\hspace*{-0.5mm}}$\\\relax\raggedright #1}}}
	
	\newcommand{\eat}[1]{}

\begin{document}
	\author[P. Bartmann, M. Keller]{Philipp Bartmann, Matthias Keller}
	
	\address{Philipp Bartmann: Institut f\"ur Mathematik, Universit\"at Potsdam
		14476  Potsdam, Germany}
	\email{philipp.bartmann@uni-potsdam.de}
    \address{Matthias Keller:   Israel Institute of Advanced Studies, Jerusalem, Israel; Institut f\"ur Mathematik, Universit\"at Potsdam
14476  Potsdam, Germany}
	\email{matthias.keller@uni-potsdam.de}

	\title[Eigenvalue growth of the discrete Hodge Laplacian across dimensions]{Eigenvalue growth of the discrete Hodge Laplacian across dimensions}

\begin{abstract}
We prove several bounds on the largest and smallest eigenvalues of the combinatorial Hodge Laplacian $\Delta^H_k$ of a finite simplicial complex $\Sigma.$ As a consequence, we obtain new vanishing criteria for cohomology groups $H^k(\Sigma,\mathbb{R)}$ and confirm a conjecture of O on the dimensional monotonicity of the largest eigenvalue.
\end{abstract}
\maketitle

\section{Introduction}

Starting with the work of Eckmann \cite{Eckmann1944}, the study of combinatorial simplicial complexes and their Laplacians has been a fruitful area of mathematics. In particular, the spectrum of the discrete Hodge Laplacian $\Delta^H_k = \delta_{k-1}\partial_{k-1}+\partial_{k}\delta_k$ turned out to be of tremendous importance, as it encodes geometric information about the complex and determines its homology. Consequently, a large body of research is devoted to the interplay between spectrum and geometry; see \cite{AharoniBergerMeshulam,SO2,DuvRei,GundertWagner,HJ,lewfaces,lew2025eigenvalue,oppenheim_local,ShuklaYogeshwaran,zhang2026upper} to name a few.
In a recent paper \cite{HelmholzianGraph} on the Helmholtzian $\Delta^H_1$ of a graph $G$, which one interprets as a one-dimensional simplicial complex, the authors notice that the largest eigenvalue $\lambda^H_1$ of $\Delta^H_1$ seems to always agree with the largest eigenvalue $\lambda(G)$ of the graph Laplacian $L(G).$ In their paper, the authors formulate the following problem.
\begin{problem}\label{problem}
Is $\lambda^H_1 = \lambda(G)$ for every graph $G$?
\end{problem}
Motivated by this, the authors of \cite{SO2} recently proved the estimate
\[\lambda^H_1\leq \lambda(G)+\frac{1}{3}(n-\lambda(G)),\] where $n$ denotes the number of vertices in $G.$ They then formulate a more general conjecture, initially posed by O in an earlier version of their paper (\cite{SO1}), that generalizes Problem~\ref{problem} to arbitrary simplicial complexes. That conjecture asks about the largest eigenvalue $\lambda^+_k$ of $\Delta^+_k = \partial_k\delta_k.$

\begin{conjecture}\label{Ocon}
    For every $k\geq 1$, one has $\lambda^+_k\leq\lambda^+_{k-1}.$
\end{conjecture}

The authors of \cite{SO2} proceed to prove the conjecture for various special cases and obtain the estimate
\[\lambda^+_k\leq \lambda^+_{k-1} + \frac{(n-\lambda^+_{k-1})}{k+2}\]
for the general case.

The main result of this paper proves Conjecture~\ref{Ocon} in full generality and thereby also resolves Problem~\ref{problem}. Furthermore, we generalize an estimate for flag complexes by Aharoni-Berger-Meshulam \cite{AharoniBergerMeshulam} to general simplicial complexes and obtain new vanishing criteria for the reduced cohomology groups of a simplicial complex $\Sigma.$ These results also generalize those obtained by Lew in \cite{lewgaps} about large missing faces in $\Sigma.$ The general idea behind our result is to view $\Sigma$ as a sub-complex of the \textit{complete} complex $\mathcal{K},$ while keeping control of the error term $R_k$ that occurs when restricting the boundary operator $\widehat\delta_k$ on $\mathcal{K}$ to the boundary operator $\delta_k$ on $\Sigma.$\\
The paper is structured as follows: In Section~\ref{prelim} we recall the setting of finite  simplicial complexes and their Laplacians, following the formalism from \cite{BK}. In Section~\ref{local} we recall the localization method from \cite{AharoniBergerMeshulam}, which allows us to compare eigenvalues in different dimensions. There, we also slightly adapt the method to fit our setting. Such localization techniques were introduced by Garland in \cite{Garland} and are well established in the literature. In Section~\ref{mainsection} we prove our main result, which estimates the largest (or smallest) eigenvalues $\lambda^H_k$ of the Laplacian $\Delta^H_k$ in terms of the largest (or smallest) eigenvalue $\lambda_{k-1}^H$ and the number of vertices $n.$ As a corollary, we obtain a vanishing criterion for the reduced cohomology groups $H^k(\Sigma,\mathbb{R}).$ Our result can be seen as a generalization of that in \cite{AharoniBergerMeshulam} and is similar in nature to those in \cite{ShuklaYogeshwaran}. In Section~\ref{morebounds} we prove some more estimates on the spectrum of $\Delta^H_k$ that we derive from lower bounds on the Forman curvature $c^H$ or the number of free faces in $\Sigma.$ These estimates are independent of the main result and will be used in Section~\ref{alex}, where we apply our results to the Alexander dual $\Sigma^*$ and interpret the results in terms of $\Sigma.$

\section{Preliminaries}\label{prelim}
Let $V$ be a finite set of vertices. A subset $\Sigma$ of the power set $ \mathcal{P}(V)$ of $V$ is called an \textit{(abstract) simplicial complex} if it is closed under taking subsets, i.e., if for all $\sigma\in \Sigma$ and $\tau\subseteq\sigma$ one also has $\tau\in\Sigma.$ The elements of $\Sigma$ are called \textit{simplices} and we refer to the set of simplices of cardinality $k+1$ as $\Sigma_k.$ The index $k$ is understood as the \textit{dimension} of a simplex and we call the quantity
\[\dim(\Sigma) = \max\lbrace k\mid\Sigma_k\neq \emptyset\rbrace\] the \textit{dimension} of $\Sigma.$ Moreover, whenever $\sigma\in\Sigma$ and $\tau\subseteq\sigma$ with $\#\sigma\backslash\tau = 1$ we write $\tau\prec\sigma$ or $\sigma\succ\tau$ and say that $\tau$ is a \textit{face} of $\sigma$ or that $\sigma$ is a \textit{coface} of $\tau.$ For any $\tau\in\Sigma$ and $v\in V\backslash\tau$ such that $\sigma = \lbrace v\rbrace\cup\tau\in\Sigma$, we use the notation $\sigma = v\tau.$\\

Let $C(\Sigma_k)$ be the space of real (or complex) valued functions over $\Sigma_k.$ We turn this into the Hilbert space $\ell^2(\Sigma_k)$ by equipping it with the scalar product and  the norm
$$\langle \phi,\psi\rangle =\sum_{\tau\in\Sigma_k}\phi(\tau)\overline{\psi(\tau)},\qquad \Vert\phi\Vert^2 = \sum_{\tau\in\Sigma_k}\vert\phi(\tau)\vert^2.$$

Next, we define \textit{coboundary operators} $\delta_k:\ell^2(\Sigma_k)\rightarrow\ell^2(\Sigma_{k+1})$ for $k\geq -1$ as linear operators that satisfy \[\delta_{k+1}\delta_k = 0\] and $\delta_k1_\tau(\sigma)\in\lbrace\pm1,0\rbrace$ for all $\tau\in\Sigma_k,\sigma\in\Sigma_{k+1},$ with $\delta_k1_\tau(\sigma)\neq0$ iff $\tau\prec\sigma.$
\begin{remark}
    Our definition for a coboundary operator is different to the one in the literature; cf. \cite{AharoniBergerMeshulam,HJ,Parzanchevski,RT,zhang2026upper}. Here, our coboundary operator acts on functions, rather than alternating forms over oriented simplices. However, one can show that our definition is equivalent to the usual one, up to a unitary transformation; cf. \cite[Section 2.2]{BK}. This means, in particular, that the spectrum of Laplace operators remains unchanged. Our approach will allow us to track sign changes that naturally occur when considering alternating forms, more efficiently in order to prove certain identities.
\end{remark}

For later reference, we refer to the term $\delta_k1_\tau(\sigma)$ as $\theta(\tau,\sigma),$ whenever $\tau\prec\sigma.$ That way, one also sees that $\delta_k$ acts as 
\[\delta_k\phi(\sigma) = \sum_{\tau\prec\sigma}\theta(\tau,\sigma)\phi(\tau).\]
Given $\delta_k,$ we define the \textit{boundary operator} $\partial_k:\ell^2(\Sigma_{k+1})\rightarrow\ell^2(\Sigma_{k})$ as the adjoint of $\delta_k,$ namely $\partial_k = \delta_k^*.$ One easily finds that $\partial_k$ acts as

\[\partial_k\phi(\rho) = \sum_{\tau\succ\rho}\theta(\rho,\tau)\phi(\tau).\]

With boundary and coboundary operators at hand, we define the operators

\[\Delta^+_k = \partial_{k}\delta_k,\quad\Delta^-_k = \delta_{k-1}\partial_{k-1},\quad\Delta^H_k = \Delta^+_k+\Delta^-_k\]
and call them the $k$\textit{-up-}, $k$\textit{-down-} and $k$\textit{-Hodge-Laplacian} respectively.
For convenience, we make a certain choice for the coboundary operators in relation to the complete simplicial complex. More precisely, we consider the \textit{complete} simplicial complex $\K$ over $V,$ which is simply the powerset of $V$ \[\K = \mathcal{P}(V).\] We then choose coboundary and boundary operators $\widehat\delta_k$ and $\widehat\partial_k$ for $\K.$ 
Since $\Sigma\subseteq \K,$ we can identify $\ell^2(\Sigma_k)$ as a subspace of $\ell^2(\K_k).$ To do so, let $\pi_k:\ell^2(\K_k)\rightarrow\ell^2(\Sigma_k)$ be the restriction of functions in $\ell^2(\K_k)$ to $\ell^2(\Sigma_k).$ Likewise, let $\iota_k:\ell^2(\Sigma_k)\rightarrow\ell^2(\K_k)$ be the inclusion of $\ell^2(\Sigma_k)$ into $\ell^2(\K_k),$ obtained by extending functions on $\Sigma_k$ to $\K_k$ by zero.
\\
Then $\delta_k = \pi_{k+1}\widehat\delta_k\iota_k$ define coboundary operators for $\Sigma$ and $\partial_k = \pi_k\widehat\partial_k\iota_{k+1}$ are the respective boundary operators for all $k$. The reason for  this particular choice is the following lemma.
\begin{lemma}\label{nId}
    Let $n = \#V$, $k\geq 0$ and $\phi\in\ell^2(\Sigma_k)$. Then, 
    \[n\Vert\phi\Vert^2 = \Vert\delta_k\phi\Vert^2+\Vert\partial_{k-1}\phi\Vert^2+\Vert R_k\phi\Vert^2,\]
    where $R_k = \widehat\delta_k\iota_k-\iota_{k+1}\delta_k.$
\end{lemma}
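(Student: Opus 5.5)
The plan is to prove the identity first on the complete complex $\K$, and then restrict to $\Sigma$, with $R_k$ accounting for the lost part of the coboundary. There are three steps.

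\emph{Step 1: the complete complex.} We show that for every $\psi\in\ell^2(\K_k)$,
\[
n\Vert\psi\Vert^2=\Vert\widehat\delta_k\psi\Vert^2+\Vert\widehat\partial_{k-1}\psi\Vert^2,
\qquad\text{equivalently}\qquad \widehat\Delta^H_k=n\cdot \mathrm{Id}.
\]
We compute the matrix of $\widehat\partial_k\widehat\delta_k+\widehat\delta_{k-1}\widehat\partial_{k-1}$ entrywise.

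On the diagonal, a $k$-simplex $\tau$ has exactly $n-k-1$ cofaces and $k+1$ faces in $\K$. Since all signs $\theta$ square to $1$, the diagonal entry is $(n-k-1)+(k+1)=n$.

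Off the diagonal, take distinct $\tau,\tau'\in\K_k$. The entry is nonzero only if $\#(\tau\cap\tau')=k$. In that case there is exactly one common coface $\sigma=\tau\cup\tau'$ and exactly one common face $\rho=\tau\cap\tau'$, and both always exist in $\K$. The entry is therefore
\[
\theta(\tau,\sigma)\theta(\tau',\sigma)+\theta(\rho,\tau)\theta(\rho,\tau').
\]
To see that it vanishes, use $\widehat\delta_k\widehat\delta_{k-1}1_\rho=0$ evaluated at $\sigma$. The $2$-step paths from $\rho$ to $\sigma$ are exactly the two through $\tau$ and $\tau'$, so
\[
\theta(\rho,\tau)\theta(\tau,\sigma)+\theta(\rho,\tau')\theta(\tau',\sigma)=0.
\]
Multiplying this relation by $\theta(\rho,\tau)\theta(\tau',\sigma)$ and using $\theta^2=1$ gives exactly that the off-diagonal entry is $0$.

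This sign bookkeeping is the only delicate point. It is exactly where completeness of $\K$ is used: both the common face and the common coface always exist.

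\emph{Step 2: restriction to $\Sigma$.} Apply Step 1 to $\psi=\iota_k\phi$, noting $\Vert\iota_k\phi\Vert=\Vert\phi\Vert$.

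For the down part, $\Sigma$ is closed under subsets, so every face of a simplex in $\Sigma$ lies in $\Sigma$. Hence $\widehat\partial_{k-1}\iota_k\phi$ is supported on $\Sigma_{k-1}$ and equals $\iota_{k-1}\partial_{k-1}\phi$. Therefore $\Vert\widehat\partial_{k-1}\iota_k\phi\Vert=\Vert\partial_{k-1}\phi\Vert$.

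For the up part, split $\widehat\delta_k\iota_k\phi$ orthogonally into its restriction to $\Sigma_{k+1}$ and to $\K_{k+1}\setminus\Sigma_{k+1}$. The first piece is $\iota_{k+1}\pi_{k+1}\widehat\delta_k\iota_k\phi=\iota_{k+1}\delta_k\phi$. The second piece is
\[
\widehat\delta_k\iota_k\phi-\iota_{k+1}\delta_k\phi=R_k\phi .
\]
Since the two pieces have disjoint supports, Pythagoras gives
\[
\Vert\widehat\delta_k\iota_k\phi\Vert^2=\Vert\delta_k\phi\Vert^2+\Vert R_k\phi\Vert^2.
\]

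\emph{Step 3: conclusion.} Combining Steps 1 and 2 yields the claimed identity
\[
n\Vert\phi\Vert^2=\Vert\delta_k\phi\Vert^2+\Vert\partial_{k-1}\phi\Vert^2+\Vert R_k\phi\Vert^2 .
\]

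For $k=0$ we use the convention $\K_{-1}=\{\emptyset\}$. Each vertex then has one face, so the diagonal count $(n-1)+1=n$ still works. Any two distinct vertices share the common face $\emptyset$, and the off-diagonal cancellation goes through unchanged via $\widehat\delta_0\widehat\delta_{-1}=0$.
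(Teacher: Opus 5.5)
Your proof is correct and follows the paper's route. You reduce to the complete complex via $\Vert\iota_k\phi\Vert=\Vert\phi\Vert$, $\widehat\partial_{k-1}\iota_k\phi=\iota_{k-1}\partial_{k-1}\phi$ and the orthogonal splitting $\Vert\widehat\delta_k\iota_k\phi\Vert^2=\Vert\delta_k\phi\Vert^2+\Vert R_k\phi\Vert^2$, and then invoke $\widehat\Delta^H_k=n\cdot\mathrm{Id}$; the only difference is that you verify this last identity yourself by the entrywise computation (diagonal count $(n-k-1)+(k+1)$, off-diagonal cancellation from $\widehat\delta_k\widehat\delta_{k-1}=0$, the same sign trick the paper uses in Lemmas~\ref{claims} and~\ref{R_k-R_k-1}), whereas the paper cites it as folklore.
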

\begin{proof}
    One easily verifies that \[\Vert\phi\Vert^2 = \Vert\iota_k\phi\Vert^2,\quad\Vert\delta_k\phi\Vert^2+\Vert R_k\phi\Vert^2 = \Vert\widehat\delta_k\iota_k\phi\Vert^2\quad\text{and}\quad\Vert\partial_{k-1}\phi\Vert^2 = \Vert\widehat\partial_{k-1}\iota_k\phi\Vert^2.\]
    Thus, the statement is equivalent to 
    \[n\Vert\psi\Vert^2 = \Vert\widehat\delta_k\psi\Vert^2+\Vert\widehat\partial_{k-1}\psi\Vert^2,\]
    with $\psi = \iota_k\phi.$
    However, this is well-known to be true, because the Hodge Laplacian $\widehat\Delta_k^H$ on $\K$ satifies \[\widehat\Delta^H_k = n\cdot \mathrm {Id}_{\K_k}\] and 
    \[ \Vert\widehat\delta_k\psi\Vert^2+\Vert\widehat\partial_{k-1}\psi\Vert^2 = \langle\widehat\Delta^H_k\psi,\psi\rangle.\]
    The second  identity follows from the definition and the first formula is  folklore and is mentioned for example in \cite[Lemma~8]{GundertWagner}. Alternatively, it can also be verified with an easy computation using the Schrödinger operator representation of the Hodge Laplacian from \cite[Lemma~3.13]{BK} or a direct proof is found in \cite[Lemma~3.2]{zhang2026upper}.
\end{proof}
\section{Localization}\label{local}
In this section, we recall the localization technique for functions from \cite{AharoniBergerMeshulam}. We also slightly adapt it to our setting.\\

Let $\phi\in\ell^2(\Sigma_k)$ and fix a vertex $v\in V.$ 
We  define the function $\phi_v\in\ell^2(\Sigma_{k-1})$ for $\rho\in\Sigma$ such that $v\rho\in \Sigma$ as
\[\phi_v(\rho) = \phi(v\rho)\theta(\rho,v\rho)\]
and $0$ otherwise.

The following identities were first shown in \cite{AharoniBergerMeshulam}. Since our setting is slightly different, we include a proof for the convenience of the reader.

\begin{lemma}\label{claims}
    Let $\phi\in\ell^2(\Sigma_k),~k\geq 0.$ Then,
    \[(k+1)\Vert\phi\Vert^2 = \sum_{v\in V}\Vert\phi_v\Vert^2\quad\] 
    and for $k\geq 1,$ \[\quad k\Vert\partial_{k-1}\phi\Vert^2 = \sum_{v\in V}\Vert\partial_{k-2}\phi_v\Vert^2.\]
\end{lemma}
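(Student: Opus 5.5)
Both identities will follow from double counting, that is, from interchanging the order of summation.

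\emph{First identity.} I expand the right-hand side directly. By definition, $\phi_v(\rho)$ vanishes unless $v\notin\rho$ and $v\rho\in\Sigma$. Since $\theta(\rho,v\rho)\in\{\pm1\}$, we get
\[\sum_{v\in V}\Vert\phi_v\Vert^2=\sum_{v\in V}\sum_{\substack{\rho\in\Sigma_{k-1}\\ v\rho\in\Sigma_k}}|\phi(v\rho)|^2 .\]
The map $(v,\rho)\mapsto(v\rho,v)$ is a bijection onto the pairs $(\sigma,v)$ with $\sigma\in\Sigma_k$ and $v\in\sigma$, with inverse $\rho=\sigma\setminus\{v\}$. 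Each $\sigma$ therefore appears exactly $k+1$ times, which gives $(k+1)\Vert\phi\Vert^2$.

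\emph{Second identity.} I compute $\partial_{k-2}\phi_v$ pointwise. For $\eta\in\Sigma_{k-2}$,
\[\partial_{k-2}\phi_v(\eta)=\sum_{\rho\succ\eta}\theta(\eta,\rho)\theta(\rho,v\rho)\phi(v\rho),\]
where only those $\rho$ with $v\notin\rho$ and $v\rho\in\Sigma$ contribute. If $v\in\eta$, then $v\in\rho$ for every coface $\rho$, so $\partial_{k-2}\phi_v(\eta)=0$.

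Now suppose $v\notin\eta$ and $v\eta\in\Sigma_{k-1}$. The key step is to relate this to $\partial_{k-1}\phi(v\eta)$. Using the relation $\delta_{k-1}\delta_{k-2}=0$ on the complete complex $\K$, applied to $1_\eta$ and evaluated at $\sigma=v\rho$, the square with vertices $\eta\prec\rho,\ \eta\prec v\eta,\ \rho\prec v\rho,\ v\eta\prec v\rho$ gives
\[\theta(\eta,\rho)\theta(\rho,v\rho)=-\theta(\eta,v\eta)\theta(v\eta,v\rho).\]
Here $\theta$ on $\Sigma$ is the restriction of $\widehat\theta$ on $\K$, so the identity transfers to $\Sigma$. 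The cofaces $\sigma\succ v\eta$ in $\Sigma$ are exactly the simplices $v\rho$ with $\rho\succ\eta$, $v\notin\rho$ and $v\rho\in\Sigma$. Hence
\[\partial_{k-2}\phi_v(\eta)=-\theta(\eta,v\eta)\,\partial_{k-1}\phi(v\eta).\]
If instead $v\eta\notin\Sigma$, then no $\rho\supseteq\eta$ with $v\notin\rho$ satisfies $v\rho\in\Sigma$, by closure under subsets, so the value is $0$.

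Consequently $|\partial_{k-2}\phi_v(\eta)|^2=|\partial_{k-1}\phi(v\eta)|^2$ whenever $v\eta\in\Sigma$, and the value is $0$ otherwise. Summing over $v$ and $\eta$ and applying the same bijection as in the first part, now in dimension $k-1$, gives $k\Vert\partial_{k-1}\phi\Vert^2$.

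The main obstacle is the sign identity. I will check it carefully with $\widehat\delta_{k-1}\widehat\delta_{k-2}1_\eta(v\rho)=0$, observing that exactly two $(k-1)$-simplices lie between $\eta$ and $v\rho$, namely $\rho$ and $v\eta$.
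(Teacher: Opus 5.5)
Your proposal is correct and follows essentially the same route as the paper. The first identity is the same $(k+1)$-fold double count. For the second, you use the same sign relation $\theta(\eta,\rho)\theta(\rho,v\rho)=-\theta(\eta,v\eta)\theta(v\eta,v\rho)$ from $\delta_{k-1}\delta_{k-2}1_\eta=0$ to identify $\partial_{k-2}\phi_v(\eta)$ with $\pm\partial_{k-1}\phi(v\eta)$, and then the $k$-to-one count. You state the vanishing cases ($v\in\eta$ or $v\eta\notin\Sigma$) explicitly, where the paper leaves them implicit in its chain of equalities.
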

\begin{proof}
    The first identity follows as 
    \begin{align*}
        \sum_{v\in V}\Vert\phi_v\Vert^2 = \sum_{v\in V}\sum_{\substack{\rho\in\Sigma_{k-1}\\v\rho\in\Sigma_k}}\vert\phi(v\rho)\vert^2 = (k+1)\sum_{\tau\in\Sigma_k}\vert\phi(\tau)\vert^2 = (k+1)\Vert\phi\Vert^2.
    \end{align*}
    The factor $(k+1)$ arises, because for every $\tau\in\Sigma_k$ there are exactly $k+1$ tuples $(v,\rho)\in V\times\Sigma_{k-1}$ such that $\tau = v\rho.$
    The second equality follows in a similar way: For any $\tau' = v\rho$ with $v\in V$, $\rho\in\Sigma_{k-2}$ and any $\tau\succ\rho$ such that $v\tau\in\Sigma_k$, one finds that 
    \begin{equation}\label{thetas}
        0 = \delta_{k-1}\delta_{k-2} 1_\rho(v\tau) = \theta(\rho,\tau')\theta(\tau',v\tau)+\theta(\rho,\tau)\theta(\tau,v\tau).
    \end{equation}
    Thus,
    \begin{align*}
        \sum_{v\in V}\Vert\partial_{k-2}\phi_v\Vert^2 &=\sum_{v\in V} \sum_{\rho\in\Sigma_{k-2}}\Big\vert\sum_{\substack{\tau\succ\rho\\v\tau\in\Sigma_k}}\theta(\rho,\tau)\theta(\tau,v\tau)\phi(v\tau)\Big\vert^2\\
        & = \sum_{v\in V}\sum_{\rho\in\Sigma_{k-2}}\Big\vert\sum_{\substack{\tau\succ\rho\\v\tau\in\Sigma_k}}\theta(\rho,v\rho)\theta(v\rho,v\tau)\phi(v\tau)\Big\vert^2\\
        & = k\sum_{\tau'\in\Sigma_{k-1}}\Big\vert\sum_{\sigma\succ\tau'}\theta(\tau',\sigma)\phi(\sigma)\Big\vert^2\\
        &=k\Vert\partial_{k-1}\phi\Vert^2.
    \end{align*}
In the second equality, we used (\ref{thetas}) and in the third equality, we identified $\tau' = v\rho$ and $\sigma = v\tau.$
\end{proof}

\section{Main Result}\label{mainsection}

We prove the following theorem, which shows that the maximal eigenvalues of $\Delta^+_k,\Delta_k^-,\Delta^H_k$ do not increase in $k.$ This confirms a conjecture of O in a recent paper \cite[Conjecture~1.1]{SO2}. We also generalize the results for flag complexes from \cite{AharoniBergerMeshulam} and those about missing faces from \cite{lewgaps}.

Let 
\[A_k = \lbrace\sigma\in \K_{k+1}\backslash\Sigma_{k+1}\mid\mbox{there is }\tau\in\Sigma_k \mbox{ such that }\tau\subseteq\sigma\rbrace\] and define

\[\alpha_k = \alpha_k(\Sigma) = \min_{\sigma\in A_k}\#\lbrace\tau\in\Sigma_k\mid\tau\subseteq\sigma\rbrace -1.\]
Similarly, define
\[\beta_k = \beta_k(\Sigma) = \max_{\sigma\in A_k}\#\lbrace\tau\in\Sigma_k\mid\tau\subseteq\sigma\rbrace -1.\]
If $A_k = \emptyset,$ then we define $\alpha_k = \beta_k = 0.$\\
The constants $\alpha_k$ and $\beta_k$ describe how many faces in $\Sigma$ a \textit{non-simplex} of $\Sigma$ can have.
One clearly has $0\leq\alpha_k\leq\beta_k\leq k+1.$\\
We denote the spectrum of an operator $A$ by $\sigma(A).$
\begin{theorem}[Main estimates]\label{main}
    Let $\Sigma$ be a simplicial complex with $n = \#V.$ Let $\circ\in\lbrace\pm,H\rbrace$ and $\lambda^\circ_k = \max\sigma(\Delta^\circ_k), \mu^H_k= \min\sigma(\Delta^H_k)$ for $1\leq k\leq\dim(\Sigma).$ Then,
    \[(k+1-\alpha_k)\lambda^\circ_k+\alpha_kn\leq(k+1)\lambda^\circ_{k-1}\]
    and 
    \[(k+1-\beta_k)\mu^H_k+\beta_kn\geq(k+1)\mu^H_{k-1}.\]
\end{theorem}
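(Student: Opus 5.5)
The plan is to combine Lemma~\ref{nId}, applied in dimensions $k$ and $k-1$, with the localization identities of Lemma~\ref{claims}, so that all of the $n$-dependence is carried by the error terms $R_k$ and $R_{k-1}$; the constants $\alpha_k$ and $\beta_k$ should then enter only through a combinatorial comparison of those error terms. I describe it for $\circ=H$ and indicate the changes for $\circ=\pm$ afterwards.

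Let $\phi\in\ell^2(\Sigma_k)$ be a normalized eigenfunction of $\Delta^H_k$ for $\lambda^H_k$. Lemma~\ref{nId} gives
\[
\langle\Delta^H_k\phi,\phi\rangle=n\Vert\phi\Vert^2-\Vert R_k\phi\Vert^2 .
\]
Apply Lemma~\ref{nId} in dimension $k-1$ to each localization $\phi_v$ and sum over $v$. The first identity of Lemma~\ref{claims} then yields
\[
\sum_{v\in V}\langle\Delta^H_{k-1}\phi_v,\phi_v\rangle=(k+1)n\Vert\phi\Vert^2-\sum_{v\in V}\Vert R_{k-1}\phi_v\Vert^2 .
\]
The left-hand side is at most $\lambda^H_{k-1}\sum_v\Vert\phi_v\Vert^2=(k+1)\lambda^H_{k-1}\Vert\phi\Vert^2$. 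So the first claimed inequality, $(k+1)\lambda^H_{k-1}\ge(k+1-\alpha_k)\lambda^H_k+\alpha_kn$, follows once we have the key estimate
\[
\sum_{v\in V}\Vert R_{k-1}\phi_v\Vert^2\le(k+1-\alpha_k)\Vert R_k\phi\Vert^2 .
\]
The second claim uses the same two expansions with a minimizer $\phi$ of $\Delta^H_k$, the lower bound $\sum_v\langle\Delta^H_{k-1}\phi_v,\phi_v\rangle\ge(k+1)\mu^H_{k-1}\Vert\phi\Vert^2$, and the reverse estimate $\sum_v\Vert R_{k-1}\phi_v\Vert^2\ge(k+1-\beta_k)\Vert R_k\phi\Vert^2$.

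The main obstacle is this comparison of error terms, which I would prove by a direct expansion. By definition, $R_k\phi(\sigma)=\sum_{\tau\prec\sigma,\ \tau\in\Sigma_k}\theta(\tau,\sigma)\phi(\tau)$ for $\sigma\in A_k$, and $R_k\phi$ vanishes off $A_k$. Similarly, $R_{k-1}\phi_v(\eta)$ is supported on $k$-sets $\eta\notin\Sigma_k$. Writing $\sigma=v\eta$ and using the sign relation (\ref{thetas}), valid in $\K$, each such term should regroup as a partial sum of the terms of $R_k\phi(\sigma)$. More precisely, for $\sigma\in A_k$ with $m+1$ faces in $\Sigma_k$ (so $\alpha_k\le m\le\beta_k$), we expect the contributions of all pairs $(v,\eta)$ with $v\eta=\sigma$ to add up to an expression of the form
\[
(k+1-m)\,\vert R_k\phi(\sigma)\vert^2 ,
\]
possibly plus cross terms. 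Any cross terms must cancel, or combine into the diagonal terms, after summing over all $\sigma$, in the same way that the proof of Lemma~\ref{claims} handles them. There is one more kind of term to account for: pairs $(v,\eta)$ with $v\in\eta$ but $\eta\setminus\{v\}\cup\{w\}\in\Sigma$. These are exactly the terms that the $\Vert\partial\phi_v\Vert^2$ identity of Lemma~\ref{claims} already captures. Once the grouping identity is established, bounding $k+1-m$ above by $k+1-\alpha_k$ and below by $k+1-\beta_k$ gives both inequalities. If exact cancellation fails, I would instead apply Cauchy--Schwarz to each regrouped block, which should still produce the factor $k+1-m$.

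For $\circ=+$, take $\phi$ to be a $\lambda^+_k$-eigenfunction. It lies in $(\ker\delta_k)^\perp=\overline{\operatorname{ran}\partial_k}$, so $\partial_{k-1}\phi=0$ and $\langle\Delta^+_k\phi,\phi\rangle=\langle\Delta^H_k\phi,\phi\rangle$. On the localized side, the second identity of Lemma~\ref{claims} gives $\sum_v\Vert\partial_{k-2}\phi_v\Vert^2=k\Vert\partial_{k-1}\phi\Vert^2=0$, so $\sum_v\langle\Delta^+_{k-1}\phi_v,\phi_v\rangle=\sum_v\langle\Delta^H_{k-1}\phi_v,\phi_v\rangle$. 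The argument above therefore goes through verbatim with $\lambda^+$ in place of $\lambda^H$.

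For $\circ=-$, the nonzero spectra of $\Delta^-_k$ and $\Delta^+_{k-1}$ coincide, so $\lambda^-_k=\lambda^+_{k-1}$. The required inequality $(k+1-\alpha_k)\lambda^-_k+\alpha_kn\le(k+1)\lambda^-_{k-1}$ then reduces to the already established inequalities for $\circ=+$ and $\circ=H$ in one lower dimension. Combining them uses $\lambda^\circ\le n$, which holds by Lemma~\ref{nId}.
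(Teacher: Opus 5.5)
Your reduction is the same as the paper's. You expand Lemma~\ref{nId} for $\phi$ and for the $\phi_v$, use the first identity of Lemma~\ref{claims}, and everything comes down to $(k+1-\beta_k)\Vert R_k\phi\Vert^2\le\sum_v\Vert R_{k-1}\phi_v\Vert^2\le(k+1-\alpha_k)\Vert R_k\phi\Vert^2$, which is the paper's Lemma~\ref{errorsum}. \textbf{This is a genuine gap:} that comparison is the only place where $\alpha_k,\beta_k$ enter, and you leave it as an expectation. The mechanism you sketch (partial sums, cross terms, a Cauchy--Schwarz fallback) is also not what happens.

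The missing observation is the following. Let $\eta\in\K_k\setminus\Sigma_k$, $v\notin\eta$ and $\sigma=v\eta$. Then every $\tau\in\Sigma_k$ with $\tau\subseteq\sigma$ contains $v$, because the only $k$-face of $\sigma$ avoiding $v$ is $\eta$ itself. Hence the sum defining $R_{k-1}\phi_v(\eta)$ runs, via $\rho=\tau\cap\eta$ and $\tau=v\rho$, over \emph{all} $\tau\in\Sigma_k$ contained in $\sigma$. The relation $\theta(\rho,\tau)\theta(\tau,\sigma)=-\theta(\rho,\eta)\theta(\eta,\sigma)$ in $\K$ then gives $R_{k-1}\phi_v(\eta)=-\theta(\eta,\sigma)R_k\phi(\sigma)$, which is Lemma~\ref{R_k-R_k-1}. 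So each such pair $(v,\eta)$ contributes exactly $\vert R_k\phi(\sigma)\vert^2$, and there are no cross terms at all.

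Pairs with $v\in\eta$ contribute nothing. Indeed, $\phi_v$ vanishes on faces containing $v$, and it vanishes on $\eta\setminus\{v\}$ because $\eta\notin\Sigma_k$. These pairs have nothing to do with $\Vert\partial_{k-2}\phi_v\Vert^2$, which sits in the energy part, not in the error term. Write $N_\sigma=\#\{\tau\in\Sigma_k\mid\tau\subseteq\sigma\}$. There are $k+2-N_\sigma$ admissible $\eta\subseteq\sigma$, so $\sum_v\Vert R_{k-1}\phi_v\Vert^2=\sum_{\sigma\in A_k}(k+2-N_\sigma)\vert R_k\phi(\sigma)\vert^2$ exactly, and $\alpha_k\le N_\sigma-1\le\beta_k$ finishes the argument.

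Your Cauchy--Schwarz fallback could not replace this. A genuine partial sum of the terms of $R_k\phi(\sigma)$ can be nonzero while $R_k\phi(\sigma)=0$, so it admits no bound by a multiple of $\vert R_k\phi(\sigma)\vert^2$. You also need control in both directions anyway.

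Two smaller points. First, your reduction for $\circ=-$ is too loose. The substitution $\lambda^-_k=\lambda^+_{k-1}$ alone yields an inequality between $\lambda^+_{k-1}$ and $\lambda^+_{k-2}$ with constant $\alpha_k$. That is not the $+$-inequality in dimension $k-1$, which carries $\alpha_{k-1}$. What works, and what the paper does, is $(k+1-\alpha_k)\lambda^-_k+\alpha_kn\le(k+1-\alpha_k)\lambda^H_k+\alpha_kn\le(k+1)\lambda^H_{k-1}=(k+1)\lambda^-_{k-1}$. The last equality holds because $\lambda^+_{k-1}\le\lambda^+_{k-2}=\lambda^-_{k-1}$.

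Second, for $\circ=+$, a $\lambda^+_k$-eigenfunction lies in $(\ker\delta_k)^\perp$ only if $\lambda^+_k>0$. At $k=\dim\Sigma$ one has $\Delta^+_k=0$, and a nonzero $\phi$ with $\partial_{k-1}\phi=0$ exists only if $H^k(\Sigma,\mathbb{R})\neq\{0\}$; the paper's proof has the same blind spot. In that case the claim reads $\alpha_kn\le(k+1)\lambda^+_{k-1}$. It follows from $\lambda^+_{k-1}=\lambda^-_k\ge\Vert\Delta^-_k1_\tau\Vert^2/\langle\Delta^-_k1_\tau,1_\tau\rangle=k+1+M_\tau/(k+1)$, where $M_\tau\ge\alpha_k(n-k-1)$ is the number of $k$-simplices sharing a $(k-1)$-face with $\tau$. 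This gives $(k+1)\lambda^+_{k-1}\ge(k+1)^2+\alpha_k(n-k-1)\ge\alpha_kn$.
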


The second estimate generalizes an inequality by Ahanroni-Berger-Meshulam  \cite[Theorem~1.1]{AharoniBergerMeshulam} and its generalization by Lew \cite[Theorem~1.2]{lewgaps}. This is discussed in detail at the end of the section.\\
The first estimate above resolves O's conjecture \cite[Conjecture~1.1]{SO2}.

\begin{corollary}[O's conjecture]\label{conjecture}
    Let $\Sigma$ be an abstract simplicial complex with $n = \#V.$ Let $\circ\in\lbrace\pm,H\rbrace$ and $\lambda^\circ_k = \max\sigma(\Delta^\circ_k)$ for $1\leq k\leq\dim(\Sigma).$ Then
    \[\lambda^\circ_k\leq\lambda^\circ_{k-1}.\]
    Furthermore, $\lambda^H_k = \lambda^-_k.$  
\end{corollary}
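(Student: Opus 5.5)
The plan is to derive both claims directly from Theorem~\ref{main}, using only the a priori bound $\lambda^\circ_k\le n$ and the standard relation between the up- and down-Laplacians. No new analysis should be needed.

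\textbf{Step 1: the bound $\lambda^\circ_k\le n$.} For $\phi\in\ell^2(\Sigma_k)$, Lemma~\ref{nId} gives
\[\langle\Delta^\circ_k\phi,\phi\rangle\le\Vert\delta_k\phi\Vert^2+\Vert\partial_{k-1}\phi\Vert^2\le n\Vert\phi\Vert^2.\]
The first inequality holds because each of $\langle\Delta^\pm_k\phi,\phi\rangle$ is one of the two nonnegative terms on the right. Since every $\Delta^\circ_k$ is self-adjoint and positive semidefinite, this yields $\lambda^\circ_k\le n$ for every $k$ and every $\circ\in\lbrace\pm,H\rbrace$.

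\textbf{Step 2: monotonicity.} Recall $0\le\alpha_k\le k+1$. Step~1 gives $\alpha_k n\ge\alpha_k\lambda^\circ_k$. Combining this with the first estimate of Theorem~\ref{main},
\[(k+1)\lambda^\circ_k=(k+1-\alpha_k)\lambda^\circ_k+\alpha_k\lambda^\circ_k\le(k+1-\alpha_k)\lambda^\circ_k+\alpha_kn\le(k+1)\lambda^\circ_{k-1}.\]
Dividing by $k+1$ gives $\lambda^\circ_k\le\lambda^\circ_{k-1}$ for $1\le k\le\dim(\Sigma)$.

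\textbf{Step 3: the identity $\lambda^H_k=\lambda^-_k$.} This is the step needing the most care, though it is standard.

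First, $\delta_k\delta_{k-1}=0$ gives $\Delta^+_k\Delta^-_k=\partial_k\delta_k\delta_{k-1}\partial_{k-1}=0$, and likewise $\Delta^-_k\Delta^+_k=0$ by taking adjoints. So the two operators commute, and $\Delta^+_k$ vanishes on $\operatorname{ran}\Delta^-_k$ while $\Delta^-_k$ vanishes on $\operatorname{ran}\Delta^+_k$. These ranges are orthogonal, so $\Delta^H_k$ decomposes orthogonally, and its nonzero spectrum is the union of the nonzero spectra of $\Delta^+_k$ and $\Delta^-_k$. Hence $\lambda^H_k=\max(\lambda^+_k,\lambda^-_k)$.

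Second, $\Delta^+_k=\partial_k\delta_k=\delta_k^*\delta_k$ and $\Delta^-_{k+1}=\delta_k\partial_k=\delta_k\delta_k^*$ have the same nonzero eigenvalues, so $\lambda^+_k=\lambda^-_{k+1}$ whenever $k+1\le\dim(\Sigma)$. Step~2 applied with $\circ=-$ then gives $\lambda^+_k=\lambda^-_{k+1}\le\lambda^-_k$.

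Third, if $k=\dim(\Sigma)$, then $\Sigma_{k+1}=\emptyset$ and $\Delta^+_k=0$, so $\lambda^+_k=0\le\lambda^-_k$ trivially.

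In either case $\lambda^+_k\le\lambda^-_k$, and therefore $\lambda^H_k=\lambda^-_k$.
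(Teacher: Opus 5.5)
Your Steps~1 and~2 match the paper's argument, and the two spectral facts in Step~3 are the ones the paper uses. The problem is the last inference of Step~3. You get $\lambda^+_k=\lambda^-_{k+1}\le\lambda^-_k$ by applying Step~2 with $\circ=-$, i.e.\ by using the $\circ=-$ case of Theorem~\ref{main}. That case is part of the theorem's statement, but the paper never proves it independently. The localization argument handles $\circ=H$ and $\circ=+$ directly. For $\circ=+$ this works because $\partial_{k-1}\phi=0$ forces $\partial_{k-2}\phi_v=0$ by Lemma~\ref{claims}. Nothing analogous happens for $\Delta^-_k$: in fact $\delta_{k-1}\phi_v(\tau)=\phi(\tau)$ for every $\tau\in\Sigma_k$ containing $v$. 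The paper therefore obtains the estimate for $\lambda^-_k$ from the one for $\lambda^H_k$ via $\lambda^-_k=\lambda^H_k$, which is the second claim of the corollary you are proving. The paper explicitly notes that its proof of this identity uses only the $\circ=+$ case, precisely to avoid this loop. Within the paper, your Step~3 is circular.

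The fix is a one-line change: use $\circ=+$ instead. For $1\le k\le\dim(\Sigma)$, apply Step~2 with $\circ=+$. Combine it with your second observation applied to $\delta_{k-1}$, so that $\Delta^+_{k-1}=\delta_{k-1}^*\delta_{k-1}$ and $\Delta^-_k=\delta_{k-1}\delta_{k-1}^*$ have the same nonzero spectrum. This gives $\lambda^+_k\le\lambda^+_{k-1}=\lambda^-_k$, hence $\lambda^H_k=\max\{\lambda^-_k,\lambda^+_k\}=\lambda^-_k$. This is exactly the paper's chain $\lambda^H_k=\max\{\lambda^+_{k-1},\lambda^+_k\}=\lambda^+_{k-1}=\lambda^-_k$. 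It also makes your separate case $k=\dim(\Sigma)$ unnecessary. Prove the $\circ=-$ part of the monotonicity only after $\lambda^-_k=\lambda^H_k$ is established, which is the order the paper follows.
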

\begin{proof}
    The bound from Theorem~\ref{main} implies
    \[(k+1)\lambda^\circ_k+\alpha_k(n-\lambda^\circ_k)\leq(k+1)\lambda^\circ_{k-1}.\]
    The first statement now follows because $\lambda^\circ_k\leq n$ by Lemma~\ref{nId},  see also \cite{DuvRei}.\\
    The second statement follows from the first and from the well-known spectral identities for $k\geq0$
    \[\sigma(\Delta^H_k)\backslash\lbrace0\rbrace = (\sigma(\Delta^-_k)\cup\sigma(\Delta^+_k))\backslash\lbrace0\rbrace\quad\text{and}\quad\sigma(\Delta^+_k)\backslash\lbrace 0\rbrace = \sigma(\Delta^-_{k+1})\backslash\lbrace 0\rbrace.\]
    Thus, $\lambda_k^H =\max\lbrace\lambda_k^-,\lambda_{k}^+\rbrace = \max\lbrace\lambda_{k-1}^+,\lambda_{k}^+\rbrace=\lambda_{k-1}^+=\lambda_k^-$, where we used $\lambda_k^+\leq\lambda_{k-1}^+$ in the third equality.\end{proof}

\begin{remark}
    The statement about $\mu^H_k$ in Theorem~\ref{main} generalizes the estimate from \cite[Theorem~1.1]{AharoniBergerMeshulam} for flag complexes to arbitrary finite simplicial complexes. In fact, for a flag complex one finds that $\beta_k\leq 1,$ in which case we recover the estimate in \cite[Theorem~1.1]{AharoniBergerMeshulam}. The fact $\beta_k\leq 1$ follows because otherwise there exists a $\sigma\in A_k$ with at least three simplices $\tau_1,\tau_2,\tau_3\in\Sigma_k$ such that $\tau_i\subseteq\sigma,i = 1,2,3.$ However, three such simplices necessarily form a clique, which implies $\sigma\in\Sigma_{k+1},$ contradicting the fact that $\sigma\in A_k.$ Our result is also similar to an estimate due to Lew \cite{lewgaps}, but holds in a more general setting that does not require large missing faces in $\Sigma.$
\end{remark}

Before we prove Theorem \ref{main}, we will need some lemmas. Recall the definition $R_k = \widehat\delta_k\iota_k-\iota_{k+1}\delta_k.$

\begin{lemma}\label{R_k-R_k-1}
    Let $\sigma\in \K\backslash \Sigma,$ $ v\in V,$ $\tau'\in \K_k\backslash \Sigma_k,$ $k\geq 1$ and $\sigma = v\tau'.$ Then, for any $\phi\in C(\Sigma_k)$, we have
    \[\vert R_{k}\phi(\sigma)\vert = \vert R_{k-1}\phi_v(\tau')\vert.\]
\end{lemma}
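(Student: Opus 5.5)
Since $\sigma\notin\Sigma$, the inclusion term vanishes at $\sigma$, so $R_k\phi(\sigma)=\widehat\delta_k\iota_k\phi(\sigma)$. Likewise $\tau'\notin\Sigma_k$ gives $R_{k-1}\phi_v(\tau')=\widehat\delta_{k-1}\iota_{k-1}\phi_v(\tau')$. The plan is to expand both sides and match them term by term, up to one global sign $\theta(\tau',\sigma)$.

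Write $\widehat\theta$ for the signs of the complete complex; by the choice $\delta_k=\pi_{k+1}\widehat\delta_k\iota_k$ these agree with $\theta$ on $\Sigma$. The faces of $\sigma=v\tau'$ are $\tau'$ itself and the simplices $v\rho$ with $\rho\prec\tau'$. Since $\tau'\notin\Sigma$, the term with $\tau'$ drops, and
\[R_k\phi(\sigma)=\sum_{\rho\prec\tau',\ v\rho\in\Sigma_k}\widehat\theta(v\rho,\sigma)\phi(v\rho).\]
On the other side,
\[R_{k-1}\phi_v(\tau')=\sum_{\rho\prec\tau'}\widehat\theta(\rho,\tau')\phi_v(\rho)=\sum_{\rho\prec\tau',\ v\rho\in\Sigma_k}\widehat\theta(\rho,\tau')\theta(\rho,v\rho)\phi(v\rho).\]
Here we use that $\phi_v(\rho)=0$ unless $v\rho\in\Sigma$, in which case $\rho\in\Sigma$ automatically. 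Both sums therefore run over the same index set with the same values of $\phi$.

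The key step compares the coefficients. We apply $\widehat\delta_k\widehat\delta_{k-1}1_\rho(\sigma)=0$ on $\K$. The two paths from $\rho$ up to $\sigma$ pass through $\tau'$ and through $v\rho$, so
\[\widehat\theta(\rho,\tau')\widehat\theta(\tau',\sigma)+\widehat\theta(\rho,v\rho)\widehat\theta(v\rho,\sigma)=0.\]
Since all signs are $\pm1$, this gives $\widehat\theta(\rho,\tau')\theta(\rho,v\rho)=-\widehat\theta(\tau',\sigma)\widehat\theta(v\rho,\sigma)$. Consequently $R_{k-1}\phi_v(\tau')=-\widehat\theta(\tau',\sigma)R_k\phi(\sigma)$, and taking absolute values yields the claim. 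This is the same trick as in \eqref{thetas} in the proof of Lemma~\ref{claims}, now carried out in $\K$.

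The main obstacle is bookkeeping rather than an idea. The signs must be taken from $\widehat\delta$ on $\K$, because $\tau'$ and $\sigma$ are not in $\Sigma$. The index sets must also be checked to coincide exactly. In particular, $\rho$ ranges over faces of $\tau'$ with $v\notin\rho$, which holds because $v\notin\tau'$, since $\sigma=v\tau'$.
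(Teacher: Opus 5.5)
Your proposal is correct and is essentially the paper's proof. Both arguments do the following: drop the $\iota\delta$ terms because $\tau',\sigma\notin\Sigma$; index the faces of $\sigma$ lying in $\Sigma_k$ as $v\rho$ with $\rho\prec\tau'$ (the paper writes $\rho=\tau\cap\tau'$); and apply $\widehat\delta\widehat\delta 1_\rho(\sigma)=0$ in $\K$ to get $R_{k-1}\phi_v(\tau')=-\theta(\tau',\sigma)R_k\phi(\sigma)$.
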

\begin{proof}
    First of all, note that for any $\tau\in\Sigma_k$ with $\tau\subseteq\sigma,$ we have
    $v\in\tau$ since otherwise $\tau = \tau'\notin\Sigma_k.$ Moreover, we know that $\rho = \tau\cap\tau'\in\Sigma_{k-1}$ and
    \begin{equation}\label{theta}
        0 = \widehat\delta\widehat\delta1_{\rho}(\sigma) = \theta(\rho,\tau)\theta(\tau,\sigma)+\theta(\rho,\tau')\theta(\tau',\sigma).
    \end{equation}
    Consequently, we get
    \begin{align*}
        R_{k-1}\phi_v(\tau') &=\sum_{\rho\subseteq\tau',\rho\in\Sigma_{k-1}} \theta(\rho,\tau')\phi_v(\rho)\\
        &= \sum_{\rho\subseteq\tau',\rho\in\Sigma_{k-1}} \theta(\rho,\tau')\theta(\rho,v\rho)\phi(v\rho)\\
        &= \sum_{\tau\subseteq\sigma,\tau\in\Sigma_k}\theta(\tau\cap\tau',\tau')\theta(\tau\cap\tau',\tau)\phi(\tau)\\
        &= -\sum_{\tau\subseteq\sigma,\tau\in\Sigma_k}\theta(\tau',\sigma)\theta(\tau,\sigma)\phi(\tau)\\
        &= -\theta(\tau',\sigma) R_k\phi(\sigma).
    \end{align*}
 In the first and last equalities we use $\iota_{k}\delta_{k-1}\phi(\tau') = 0=\iota_{k+1}\delta_{k}\phi(\sigma)$ as $\tau',\sigma\notin \Sigma$, in the  third equality we identify $\rho$ with $\tau\cap\tau'$ and in the fourth equality we use \eqref{theta}. Taking absolute values finishes the proof.
\end{proof}
\begin{lemma}\label{errorsum}
    For every $k\geq 1$ and $\phi\in C(\Sigma_k)$, we have
    \begin{equation*}
        \beta_k\Vert R_k\phi\Vert^2\geq(k+1)\Vert R_k\phi\Vert^2-\sum_{v\in V}\Vert R_{k-1}\phi_v\Vert^2\geq \alpha_k\Vert R_k\phi\Vert^2.
    \end{equation*}
\end{lemma}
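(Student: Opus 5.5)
We expand both $\Vert R_k\phi\Vert^2$ and $\sum_{v}\Vert R_{k-1}\phi_v\Vert^2$ as sums over non-simplices and regroup the second sum over the non-simplices $\sigma\in\K_{k+1}\backslash\Sigma_{k+1}$ using Lemma~\ref{R_k-R_k-1}. First, $R_k\phi = \widehat\delta_k\iota_k\phi-\iota_{k+1}\delta_k\phi$ vanishes on $\Sigma_{k+1}$, since there $\widehat\delta_k\iota_k\phi$ and $\delta_k\phi$ agree. It also vanishes on every $\sigma\in\K_{k+1}\backslash\Sigma_{k+1}$ containing no $\tau\in\Sigma_k$. Hence
\[\Vert R_k\phi\Vert^2 = \sum_{\sigma\in A_k}\vert R_k\phi(\sigma)\vert^2.\]
Similarly, $R_{k-1}\phi_v$ is supported on $\K_k\backslash\Sigma_k$, and in fact only on $\tau'$ with $v\notin\tau'$ and $v\tau'\notin\Sigma$. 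Indeed, $\phi_v(\rho)\neq0$ forces $v\notin\rho$, so if $v\in\tau'$ the only face $\rho\prec\tau'$ with $v\notin\rho$ is $\tau'\backslash\{v\}$, whose coface $v\rho=\tau'\notin\Sigma$, giving $\phi_v(\rho)=0$. And $v\tau'\notin\Sigma$ holds automatically because $\tau'\notin\Sigma$ and $\Sigma$ is closed under subsets.

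The key regrouping is then
\[\sum_{v\in V}\Vert R_{k-1}\phi_v\Vert^2=\sum_{v\in V}\sum_{\substack{\tau'\in\K_k\backslash\Sigma_k\\ v\notin\tau'}}\vert R_{k-1}\phi_v(\tau')\vert^2 = \sum_{\sigma\in\K_{k+1}\backslash\Sigma_{k+1}}\ \sum_{\substack{v\in\sigma\\ \sigma\backslash\{v\}\notin\Sigma_k}}\vert R_k\phi(\sigma)\vert^2,\]
where the last step substitutes $\sigma=v\tau'$ and applies Lemma~\ref{R_k-R_k-1}. Only $\sigma\in A_k$ contribute, since elsewhere $R_k\phi(\sigma)=0$. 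For fixed $\sigma\in A_k$, the number of $v\in\sigma$ with $\sigma\backslash\{v\}\notin\Sigma_k$ is $(k+2)-m_\sigma$, where $m_\sigma=\#\{\tau\in\Sigma_k\mid\tau\subseteq\sigma\}$. This is because the facets of $\sigma$ correspond bijectively to the removed vertices.

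Subtracting gives
\[(k+1)\Vert R_k\phi\Vert^2-\sum_{v}\Vert R_{k-1}\phi_v\Vert^2=\sum_{\sigma\in A_k}\big((k+1)-(k+2-m_\sigma)\big)\vert R_k\phi(\sigma)\vert^2=\sum_{\sigma\in A_k}(m_\sigma-1)\vert R_k\phi(\sigma)\vert^2.\]
By definition, $\alpha_k\le m_\sigma-1\le\beta_k$ for each $\sigma\in A_k$. Bounding each coefficient accordingly yields both inequalities. If $A_k=\emptyset$, both sides are $0$.

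The main obstacle is the support bookkeeping in the regrouping step. One must make sure that pairs $(v,\tau')$ with $v\in\tau'$ contribute nothing. One must also check that Lemma~\ref{R_k-R_k-1} applies exactly when $\tau'\notin\Sigma_k$, which requires $\sigma=v\tau'\in\K\backslash\Sigma$; this holds since a superset of a non-simplex is a non-simplex. Once these are settled, the count $k+2-m_\sigma$ is immediate.
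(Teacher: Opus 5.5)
Your proof is correct and follows the paper's argument. It restricts to $\sigma\in\K_{k+1}\backslash\Sigma_{k+1}$, regroups $\sum_{v}\Vert R_{k-1}\phi_v\Vert^2$ via $\sigma=v\tau'$ and Lemma~\ref{R_k-R_k-1}, and counts the $(k+2)-m_\sigma$ admissible pairs to get the coefficient $m_\sigma-1$. Your explicit check that $R_{k-1}\phi_v(\tau')=0$ whenever $v\in\tau'$ also justifies a step the paper performs without comment.
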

\begin{proof}
    Any $\sigma\in \K_{k+1}\backslash\Sigma_{k+1}$ such that $R_k\phi(\sigma)\neq 0$ must satisfy $\sigma\in A_k.$ Therefore, we get
    \begin{align*}
        &(k+1)\Vert R_k\phi\Vert^2-\sum_{v\in V}\Vert R_{k-1}\phi_v\Vert^2\\
        & = (k+1)\sum_{\sigma\in \K_{k+1}\backslash\Sigma_{k+1} } \vert R_k\phi(\sigma)\vert^2-\sum_{v\in V}\sum_{\tau'\in \K_k\backslash\Sigma_k}\vert R_{k-1}\phi_v(\tau')\vert^2\\
        &= \sum_{\sigma\in \K_{k+1}\backslash\Sigma_{k+1}} \vert R_k\phi(\sigma)\vert^2\left(k+1-\#\lbrace (v,\tau')\mid v\in V,\tau'\in \K_k\backslash\Sigma_k, \sigma = v\tau'\rbrace\right)\\
        &= \sum_{\sigma\in \K_{k+1}\backslash\Sigma_{k+1}} \vert R_k\phi(\sigma)\vert^2\left(\#\lbrace\tau\in\Sigma_k\mid\tau\subseteq\sigma\rbrace-1\right)
    \end{align*}
    The second equation follows from Lemma~\ref{R_k-R_k-1}.
    The third equation follows from the fact that  $\sigma$ has $k+2$ vertices, so there are $k+2$ pairs $(v,\tau')$ with $\sigma=v\tau'$ with say $N_\sigma$ of these have $\tau'\in\Sigma_k$, leaving $(k+2)-N_\sigma$ to be subtracted, which simplifies the bracket to $N_\sigma-1$. 
    From this the lemma follows immediately.
\end{proof}

\begin{proof}[Proof of Theorem~\ref{main}]
We start with the case $\circ = H.$\\
Let $\phi$ be an eigenfunction of $\lambda^H_k.$ We apply Lemma~\ref{nId} to $\phi$ and $\phi_v,v\in V$ and use Lemma~\ref{claims} to get the identity
\[(k+1)\left(\Vert\delta_k\phi\Vert^2+\Vert\partial_{k-1}\phi\Vert^2+\Vert R_k\phi\Vert^2\right) = \sum_{v\in V}\Vert\delta_{k-1}\phi_v\Vert^2+\Vert\partial_{k-2}\phi_v\Vert^2+\Vert R_{k-1}\phi_v\Vert^2.\]
Now, we use that $\Vert\delta_k\phi\Vert^2+\Vert\partial_{k-1}\phi\Vert^2 = \lambda^H_k\Vert\phi\Vert^2$ and $\Vert\delta_{k-1}\phi_v\Vert^2+\Vert\partial_{k-2}\phi_v\Vert^2\leq\lambda^H_{k-1}\Vert\phi_v\Vert^2$ and get
\[(k+1)\lambda^H_k\Vert\phi\Vert^2 +(k+1)\Vert R_k\phi\Vert^2-\sum_{v\in V}\Vert R_{k-1}\phi_v\Vert^2\leq\lambda^H_{k-1}\sum_{v\in V}\Vert\phi_v\Vert^2.\]
Using Lemma~\ref{errorsum} and Lemma~\ref{claims} yields
\[(k+1)\lambda^H_k\Vert\phi\Vert^2 +\alpha_k\Vert R_k\phi\Vert^2\leq(k+1)\lambda^H_{k-1}\Vert\phi\Vert^2.\]
Finally, we use that $\Vert R_k\phi\Vert^2 = (n-\lambda^H_k)\Vert\phi\Vert^2,$ which follows from Lemma~\ref{nId}. This proves the estimate for $\lambda^H_k.$\\
The proof for $\lambda^+_k$ follows by the exact same arguments. This is because an eigenfunction $\phi$ of $\lambda^+_k$ satisfies $\Vert\partial_{k-1}\phi\Vert^2 = 0$ and $\Vert\partial_{k-2}\phi_v\Vert^2 = 0$ for all $v\in V.$ The latter is due to Lemma~\ref{claims}.\\
The proof for $\mu^H_k$ is similar to the proof for $\lambda^H_k,$ just with reversed inequalities and $\alpha_k$ exchanged for $\beta_k.$\\
The proof for $\lambda^-_k$ follows because $\lambda^-_k = \lambda^H_k$ for which the statement is already proven. We proved the identity $\lambda^-_k = \lambda^H_k$ in Corollary~\ref{conjecture}, which only relied on the statement of Theorem~\ref{main} for $\lambda^+_k$ which we already established.
  \end{proof}

As a corollary of Theorem~\ref{main}, we get the following vanishing criterion for the $k$-th reduced cohomology group, defined for $k\ge 0$ as
\[H^k(\Sigma,\mathbb{R})=\ker \delta_k / \mathrm{ran}\, \delta_{k-1}.\]
From now on $\mu^H_k = \min\sigma(\Delta^H_k)$ will always refer to the smallest eigenvalue of $\Delta^H_k.$
\begin{corollary}\label{homology}
    Let  $k-1\geq\ell\geq 0$. Suppose     \[\mu^H_\ell> n\left(1-\frac{(\ell+1)!}{(k+1)!}\prod_{j = \ell+1}^{k}(j+1-\beta_j)\right).\]
    Then, \[H^k(\Sigma,\mathbb{R)} = \lbrace0\rbrace.\]
\end{corollary}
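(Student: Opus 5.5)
We iterate the second estimate of Theorem~\ref{main} from level $\ell$ up to level $k$, and then use the Hodge isomorphism between $H^k(\Sigma,\mathbb{R})$ and $\ker\Delta^H_k$.

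It is convenient to work with the gaps $g_j = n-\mu^H_j\ge 0$. These are nonnegative because $\mu^H_j\le\lambda^H_j\le n$ by Lemma~\ref{nId}. The second estimate of Theorem~\ref{main} reads $(j+1-\beta_j)\mu^H_j+\beta_j n\geq (j+1)\mu^H_{j-1}$. Subtracting both sides from $(j+1)n$ gives
\[
(j+1-\beta_j)\,g_j\le (j+1)\,g_{j-1}.
\]
We first treat the case where every factor $j+1-\beta_j$ with $\ell+1\le j\le k$ is positive, which will be the main case. Then
\[
g_j\le \frac{j+1}{j+1-\beta_j}\,g_{j-1}.
\]
Iterating from $j=\ell+1$ to $j=k$ yields
\[
g_k\le g_\ell\prod_{j=\ell+1}^{k}\frac{j+1}{j+1-\beta_j}=g_\ell\,\frac{(k+1)!}{(\ell+1)!}\prod_{j=\ell+1}^{k}(j+1-\beta_j)^{-1}.
\]
The hypothesis of Corollary~\ref{homology} is exactly
\[
g_\ell< n\,\frac{(\ell+1)!}{(k+1)!}\prod_{j=\ell+1}^{k}(j+1-\beta_j).
\]
Combining the two gives $g_k<n$, that is, $\mu^H_k>0$.

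Next we must check that Theorem~\ref{main} actually applies at every level $j$ between $\ell+1$ and $k$. The theorem is stated for $1\le j\le\dim(\Sigma)$. The lower bound holds since $j\ge\ell+1\ge1$. If $k>\dim\Sigma$, then $\Sigma_k=\emptyset$ and $H^k$ vanishes trivially, so we may assume $k\le\dim\Sigma$.

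Now suppose some factor $j+1-\beta_j$ vanishes. Then the product is zero, and the hypothesis reduces to $\mu^H_\ell>n$. This contradicts $\mu^H_\ell\le n$, so the hypothesis is never met in this case and there is nothing to prove. This disposes of the degenerate case. The one genuinely delicate point is that dividing by $j+1-\beta_j$ requires positivity; this holds because $\beta_j\le j+1$.

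To finish, we invoke discrete Hodge theory. Since $\delta_k\delta_{k-1}=0$, we have $\ker\Delta^H_k=\ker\delta_k\cap\ker\partial_{k-1}$, and this space is isomorphic to $\ker\delta_k/\mathrm{ran}\,\delta_{k-1}$. Indeed, $\ker\partial_{k-1}=(\mathrm{ran}\,\delta_{k-1})^\perp$, so $\ker\delta_k$ splits orthogonally as $\mathrm{ran}\,\delta_{k-1}\oplus(\ker\delta_k\cap\ker\partial_{k-1})$. The bound $\mu^H_k>0$ gives $\ker\Delta^H_k=\{0\}$, and therefore $H^k(\Sigma,\mathbb{R})=\{0\}$.

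I expect no real obstacle here. The only points needing care are the sign of each factor $j+1-\beta_j$ in the iteration and the range of $j$ for which Theorem~\ref{main} is valid, both handled above.
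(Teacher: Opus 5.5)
Your proposal is correct and follows the paper's proof: iterate the second estimate of Theorem~\ref{main} from level $\ell$ up to $k$ to get $\mu^H_k>0$, then use the Hodge isomorphism $H^k(\Sigma,\mathbb{R})\cong\ker\Delta^H_k$. The paper writes the iterated bound in multiplied-out form, $\mu^H_k P\ge \mu^H_\ell-n(1-P)$ with $P$ the product in the hypothesis, which rules out $P=0$ automatically; your separate case split for $P=0$ and your check that $k\le\dim(\Sigma)$ are equivalent, slightly more explicit, bookkeeping.
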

\begin{proof}
    By the well-known discrete Hodge-Theorem, we have $H^k(\Sigma,\mathbb{R})\cong\ker\Delta^H_k$, see  \cite{Eckmann1944}. Thus, the statement follows if $\mu^H_k>0.$ Iterating Theorem~\ref{main} yields \[\mu^H_k\frac{(\ell+1)!}{(k+1)!}\prod_{i=\ell+1}^{k}(i+1-\beta_i)\geq \mu_\ell^H-n\left(1-\frac{(\ell+1)!}{(k+1)!}\prod_{i = \ell+1}^k(i+1-\beta_i)\right),\]
    which implies the statement.\end{proof}

\begin{remark} Corollary~\ref{homology} generalizes \cite[Theorem~1.2]{AharoniBergerMeshulam} for flag complexes to general simplicial complexes. Indeed, as mentioned in the remark after Corollary~\ref{conjecture}, we find for flag complexes that $\beta_i\leq 1$ for all $i\geq 1.$ Thus, Corollary~\ref{homology} implies here, that $H^k(\Sigma,\mathbb{R})=\lbrace 0\rbrace$ if \[\mu^H_0> n\left(1-\frac{1}{(k+1)!}\prod_{i = 1}^k(i+1-1)\right) = n\frac{k}{k+1}.\] This is precisely the statement from \cite{AharoniBergerMeshulam}.
\end{remark} 

Theorem~\ref{main} and Corollary~\ref{homology} also improve the results \cite[Theorem~1.2 and Theorem~1.3]{lewgaps}. To make this more obvious, we prove the following Lemma.
\begin{lemma}\label{lew}
    Let $d$ be the maximal dimension of a \textit{missing face} in the sense of \cite{lewgaps}. In other words, $d = \max\lbrace k\mid\beta_{k-1} = k\rbrace.$
    We have \[\beta_k\leq d\]
    for all $k\geq 0.$
\end{lemma}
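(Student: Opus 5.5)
The plan is to take a non-simplex $\sigma\in A_k$ attaining $\beta_k$, and to locate inside $\sigma$ a minimal non-face $\rho$ that must contain a large set of vertices. A minimal non-face is exactly a missing face, and its dimension will bound $\beta_k$ from above.

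\emph{Trivial cases.} If $A_k=\emptyset$ then $\beta_k=0$ and there is nothing to prove. We assume $d\geq 0$, which is the convention implicit in the statement. Otherwise pick $\sigma\in A_k$ attaining the maximum, so $\sigma\in\K_{k+1}\setminus\Sigma_{k+1}$ has $k+2$ vertices. Write the $k$-faces of $\sigma$ lying in $\Sigma_k$ as $\sigma\setminus\{w\}$ for $w$ in a set $S\subseteq\sigma$. Then $\#S=\beta_k+1$. If $\beta_k=0$ we are done, so assume $\#S\geq 2$.

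\emph{Key step: a minimal non-face inside $\sigma$.} Since $\Sigma$ is closed under subsets, every subset of $\sigma$ that omits at least one vertex of $S$ lies in some $\sigma\setminus\{w\}\in\Sigma$, and hence lies in $\Sigma$. Since $\sigma\notin\Sigma$, choose $\rho\subseteq\sigma$ that is inclusion-minimal among subsets of $\sigma$ not in $\Sigma$. By the previous observation, $\rho\supseteq S$, so $\#\rho\geq\beta_k+1\geq 2$. By minimality, every proper subset of $\rho$ is in $\Sigma$. Thus $\rho$ is a missing face of dimension $j=\#\rho-1\geq\beta_k$.

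\emph{Translating to the $\beta$-language.} Set $\rho\in\K_j$ with $j\geq 1$. All $j+1$ facets of $\rho$ lie in $\Sigma_{j-1}$, so $\rho\in A_{j-1}$ with facet count $j+1$. Hence $\beta_{j-1}\geq j$. The bound $\beta_{j-1}\leq j$ holds always, because an element of $A_{j-1}$ has only $j+1$ facets. So $\beta_{j-1}=j$, and therefore $d\geq j\geq\beta_k$.

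The only real subtlety is the observation that the minimal non-face must contain all of $S$. Everything else is bookkeeping with the definitions of $A_k$ and $\beta_k$.
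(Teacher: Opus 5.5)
Your proof is correct. It runs the paper's mechanism directly, where the paper argues by contradiction.

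The paper takes the smallest $k_0$ with $\beta_{k_0}\geq d+1$ and picks $\sigma$ with $d+2$ faces $\tau_i\in\Sigma_{k_0}$. It uses maximality of $d$ to find a facet $\tau'\subseteq\sigma$ with $\tau'\notin\Sigma$. The intersections $\tau'\cap\tau_i$ are then $d+2$ distinct faces of $\tau'$ in $\Sigma_{k_0-1}$, so $\beta_{k_0-1}\geq d+1$, contradicting minimality of $k_0$.

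In your notation, $\tau'=\sigma\setminus\{u\}$ with $u\notin S$, and $\tau'\cap\tau_i=\sigma\setminus\{u,w_i\}$. So each step of the paper deletes one vertex outside $S$ and keeps all of $S$. Iterating until every facet lies in $\Sigma$ lands exactly on an inclusion-minimal non-face containing $S$, which you get in one stroke.

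What your route buys:
\begin{itemize}
\item an explicit witness, namely a missing face $\rho\supseteq S$ of dimension at least $\beta_k$;
\item a transparent link between the formal definition $d=\max\{k\mid\beta_{k-1}=k\}$ and Lew's ``maximal dimension of a missing face'';
\item no induction or minimal counterexample over $k$.
\end{itemize}
The paper's route instead stays entirely inside the $\beta$-bookkeeping and never has to name minimal non-faces.

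Your assumption $d\geq 0$ is automatic, not a convention. For nonempty $\Sigma$ one has $\Sigma_{-1}=\{\emptyset\}$. Every element of $A_{-1}$ is a vertex not in $\Sigma$, with exactly one face in $\Sigma_{-1}$. Hence $\beta_{-1}=0$, and $0$ always belongs to the set defining $d$.
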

\begin{proof}
    We prove the statement by contradiction. Let $k_0$ be the smallest integer such that $\beta_{k_0}\geq d+1.$ This means, in particular, that $k_0\geq d.$ From the definition of $\beta_{k_0}$ we get the existence of $\sigma\in \mathcal{K}_{k_0+1}\backslash\Sigma_{k_0+1}$ and $\tau_1,\ldots,\tau_{d+2}\in\Sigma_{k_0}$ pairwise distinct, such that $\tau_i\subseteq\sigma$, $i = 1,\ldots,d+2.$ Moreover, we find $\tau'\in\mathcal{K}_{k_0}\backslash\Sigma_{k_0}$ with $\tau'\subseteq\sigma.$ If there were no such $\tau'$, then we would conclude that $\beta_{k_0} = k_0+1,$ which contradicts the maximality of $d.$\\
    Now observe that $\rho_i = \tau'\cap\tau_i\in\Sigma_{k_0-1}$, $i = 1,\ldots,d+2$ and that these also are pairwise distinct. Since $\rho_i\subseteq\tau',$ we conclude that $\beta_{k_0-1}\geq d+1,$ which contradicts the minimality of $k_0.$
\end{proof}
Lemma~\ref{lew} shows that the estimate for $\mu^H_k$ in Theorem~\ref{main} is stronger than \cite[Theorem~1.2]{lewgaps} and that the criterion in Corollary~\ref{homology} is weaker than the one in \cite[Theorem~1.3]{lewgaps}.
\section{More lower bounds on the spectrum of $\Delta^H_k$ }\label{morebounds}
In this section, we record some more estimates on the spectrum. Those estimates are rather basic but seem not to be recorded in the literature and will be used in the  section below on the Alexander dual. They are independent of the results in the previous sections.\\
We start with an estimate in terms of the number of free faces. A simplex $\rho\in\Sigma$ is called \textit{free} or a \textit{free face} if it has exactly one coface.

\begin{proposition}[Free faces]
    Let $d = \dim(\Sigma)$ and assume that every $\tau\in\Sigma_d$ has at least $m$ free faces. Then \[\mu^H_d\geq m.\]
\end{proposition}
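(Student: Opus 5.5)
Since $d=\dim(\Sigma)$, we have $\Sigma_{d+1}=\emptyset$, so $\delta_d=0$ and $\Delta^H_d=\Delta^-_d=\delta_{d-1}\partial_{d-1}$. It therefore suffices to show
\[\Vert\partial_{d-1}\phi\Vert^2\geq m\Vert\phi\Vert^2\qquad\text{for all }\phi\in\ell^2(\Sigma_d).\]
I would prove this by restricting the sum defining $\Vert\partial_{d-1}\phi\Vert^2$ to free faces only.

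Write
\[\Vert\partial_{d-1}\phi\Vert^2=\sum_{\rho\in\Sigma_{d-1}}\Big\vert\sum_{\tau\succ\rho}\theta(\rho,\tau)\phi(\tau)\Big\vert^2 .\]
All terms are nonnegative, so dropping every $\rho$ that is not free gives a lower bound. A free $(d-1)$-simplex $\rho$ has exactly one coface, say $\tau_\rho\in\Sigma_d$. Since $\theta(\rho,\tau_\rho)=\pm1$, the inner sum collapses to $\vert\phi(\tau_\rho)\vert^2$.

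Grouping the free $\rho$ by their unique coface $\tau$ then yields
\[\Vert\partial_{d-1}\phi\Vert^2\geq\sum_{\tau\in\Sigma_d}\#\lbrace\rho\prec\tau\mid\rho\text{ free}\rbrace\,\vert\phi(\tau)\vert^2\geq m\Vert\phi\Vert^2 .\]
This grouping is a partition, because each free face belongs to exactly one $\tau$. The hypothesis gives at least $m$ free faces per $\tau$, which is the last inequality. Taking $\phi$ to be an eigenfunction for $\mu^H_d$ gives $\mu^H_d\geq m$.

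The only subtle point is what ``free faces of $\tau$'' means. The hypothesis has to be read as free faces $\rho\prec\tau$ of codimension one. A free face of lower dimension lies in several $(d-1)$-simplices of $\tau$, so it has more than one coface and is not free in the stated sense. Faces of every dimension therefore cannot occur, and the codimension-one reading is forced. Beyond fixing this convention, I expect no real obstacle.
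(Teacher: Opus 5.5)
Your proof is correct and is essentially the paper's argument. Both note that $\Delta^H_d=\Delta^-_d$ because $\Sigma_{d+1}=\emptyset$, keep only the free $\rho$ in $\Vert\partial_{d-1}\phi\Vert^2$ (each contributing $\vert\phi(\tau)\vert^2$ for its unique coface $\tau$), and use that every $d$-simplex has at least $m$ free faces; your remark that free faces here must have codimension one is also right.
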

\begin{proof}
    As $\Sigma_{d+1} = \emptyset,$ it follows that $\Delta^H_d = \Delta^-_d.$ Thus, for any $\phi\in\ell^2(\Sigma_d)$, we get
\begin{align*}    \Vert\partial_{d-1}\phi\Vert^2 = \sum_{\rho\in\Sigma_{d-1}}\Big\vert\sum_{\tau\succ\rho}\theta(\rho,\tau)\phi(\tau)\Big\vert^2\geq\sum_{\substack{\rho\in\Sigma_{d-1}\\ \rho~ \text{free}}}\sum_{\tau\succ\rho}\vert\phi(\tau)\vert^2\geq m\Vert\phi\Vert^2,
\end{align*}
where the first inequality follows from the fact that for a free face $\rho$ there is exactly one $\tau\succ\rho$ and the second inequality follows from the assumption that every $\tau\in\Sigma_d$ has at least $m$ free faces.
Thus, the statement follows.
\end{proof}
The second estimate comes from a lower bound on the Forman curvature $c^H,$ which is itself a well-studied object in the literature; see e.g. \cite{BK,F,JM,Leal,Saucan, Sreejith}. In \cite{BK} we showed that $\Vert\delta_k\phi\Vert^2+\Vert\partial_{k-1}\phi\Vert^2$ can be represented by the energy of a signed Schrödinger operator. More precisely, one has
\[\Vert\delta_k\phi\Vert^2+\Vert\partial_{k-1}\phi\Vert^2 = \frac{1}{2}\sum_{\tau,\tau'\in\Sigma_k}b^H(\tau,\tau')\vert\phi(\tau)-o^H(\tau,\tau')\phi(\tau')\vert^2+\sum_{\tau\in\Sigma_k}c^H(\tau)\vert\phi(\tau)\vert^2,\]
where the terms $b^H(\tau,\tau')$ are non-negative, $o^H(\tau,\tau')\in\lbrace\pm 1\rbrace$ and \[c^H(\tau) = 2(k+1)+(k+2)\#\lbrace\sigma\succ\tau\rbrace-\sum_{\rho\prec\tau}\#\lbrace \tau'\succ \rho\rbrace,\]
see \cite{BK,lewfaces}. This can be interpreted as a discrete version of the Bochner-Weitzenböck formula.\\
From this, we immediately get the estimate
\[\mu^H_k\geq\min_{\tau\in\Sigma_k}c^H(\tau).\] In general, this estimate can be trivial because $c^H$ can assume negative values while the spectrum is non-negative. However, under additional assumptions, it can also lead to interesting estimates on the spectrum. For example, in \cite{lewfaces} Lew uses this to derive a lower bound on the spectrum in terms of large missing faces in $\Sigma.$ The following estimate also concerns missing faces, though in a different way than Lew's estimate.\\
For a given $\tau\in\Sigma_k,$ let \[T_k(\tau) = \lbrace\sigma\in \K_{k+1}\backslash\Sigma_{k+1}\mid\tau\subseteq\sigma\rbrace\subseteq A_k.\]
In other words, $T_k(\tau)$ is the set of \textit{missing cofaces} of $\tau.$\\
We introduce the following notation
\[d_k = \min_{\tau\in\Sigma_k}\#\lbrace\sigma\succ\tau\rbrace\]
for the minimal degree of $k$-simplices.
\begin{proposition}\label{curvature}
    Assume that $\#V =n$ and let $\tau\in\Sigma_k$, $k\geq 0.$ Then
    \[c^H(\tau) = n-\sum_{\sigma\in T_k(\tau)}\#\lbrace \tau'\in\Sigma_k\mid\tau'\subseteq\sigma\rbrace.\]
In particular,
\[\mu^H_k\geq n-\max_{\tau\in\Sigma_k}\sum_{\sigma\in T_k(\tau)}\#\lbrace \tau'\in\Sigma_k\mid\tau'\subseteq\sigma\rbrace,\]
which can further be estimated by
\begin{align*}
    \mu^H_k&\geq -\beta_kn+(\beta_k+1)(k+1)+(\beta_k+1)d_k
    \\&\geq -(k+1)n+(k+2)(k+1)+(k+2)d_k.
\end{align*}

\end{proposition}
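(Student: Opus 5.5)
The plan is to prove the identity for $c^H(\tau)$ by a double count, and then get the eigenvalue bounds from the Weitzenb\"ock estimate $\mu^H_k\geq\min_{\tau}c^H(\tau)$ stated just before the proposition.

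\emph{Step 1: rewrite the sum in $c^H(\tau)$.} Fix $\tau\in\Sigma_k$ and set $\deg\tau=\#\lbrace\sigma\succ\tau\rbrace$. I will rewrite $S=\sum_{\rho\prec\tau}\#\lbrace\tau'\succ\rho\rbrace$ as a sum over $(k+1)$-sets $\sigma\in\K_{k+1}$ with $\sigma\supseteq\tau$.

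Each face $\rho\prec\tau$ has $\tau$ itself as a coface, so $S=(k+1)+\#\lbrace(\rho,\tau')\rbrace$, where $\tau'\in\Sigma_k$, $\tau'\neq\tau$ and $\rho=\tau\cap\tau'\in\Sigma_{k-1}$. For such a pair, $\rho$ is determined by $\tau'$. Moreover $\sigma=\tau\cup\tau'$ is a $(k+2)$-element set containing $\tau$, and $\tau'\subseteq\sigma$. Conversely, every $\tau'\in\Sigma_k$ with $\tau'\neq\tau$ and $\tau'\subseteq\sigma$, for some such $\sigma$, meets $\tau$ in a $(k-1)$-face.

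Hence, writing $N_\sigma=\#\lbrace\tau'\in\Sigma_k\mid\tau'\subseteq\sigma\rbrace$,
\[
S=(k+1)+\sum_{\sigma\in\K_{k+1},\,\sigma\supseteq\tau}(N_\sigma-1).
\]

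\emph{Step 2: split the sum and simplify.} The sets $\sigma\supseteq\tau$ are exactly the $n-k-1$ sets $v\tau$ with $v\notin\tau$. They split into the cofaces $\sigma\in\Sigma_{k+1}$ and the missing cofaces $\sigma\in T_k(\tau)$. So $\deg\tau+\#T_k(\tau)=n-k-1$.

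For $\sigma\in\Sigma_{k+1}$, closedness of $\Sigma$ gives $N_\sigma=k+2$. Therefore
\[
S=(k+1)+(k+1)\deg\tau+\sum_{\sigma\in T_k(\tau)}(N_\sigma-1).
\]
Substituting into the definition of $c^H$ gives
\[
c^H(\tau)=(k+1)+\deg\tau+\#T_k(\tau)-\sum_{\sigma\in T_k(\tau)}N_\sigma=n-\sum_{\sigma\in T_k(\tau)}N_\sigma,
\]
which is the claimed identity.

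\emph{Step 3: the eigenvalue bounds.} The first bound on $\mu^H_k$ is immediate from $\mu^H_k\geq\min_\tau c^H(\tau)$.

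For the second bound, note that each $\sigma\in T_k(\tau)$ lies in $A_k$, so $N_\sigma\leq\beta_k+1$. Also $\#T_k(\tau)=n-k-1-\deg\tau\leq n-k-1-d_k$. Hence
\[
c^H(\tau)\geq n-(\beta_k+1)(n-k-1-d_k)=-\beta_kn+(\beta_k+1)(k+1)+(\beta_k+1)d_k.
\]

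The last inequality follows because the right-hand side, viewed as a function of $\beta_k$, has slope $-(n-k-1-d_k)\leq0$. This slope is nonpositive because $d_k\leq n-k-1$. Since $\beta_k\leq k+1$, we may replace $\beta_k$ by $k+1$.

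The only delicate point is the bijection in Step 1. One has to check that each pair $(\rho,\tau')$ with $\tau'\neq\tau$ corresponds to exactly one $\sigma$, and that no $\tau'$ is counted twice. Both hold because $\rho$ and $\sigma$ are each determined by $\tau'$.
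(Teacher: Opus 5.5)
Your proof is correct and follows essentially the same route as the paper: you group the $k$-simplices $\tau'$ adjacent to $\tau$ by the $(k+2)$-set $\tau\cup\tau'$, use $\#T_k(\tau)=n-k-1-\#\lbrace\sigma\succ\tau\rbrace$, and bound $N_\sigma\le\beta_k+1\le k+2$ for the two estimates. The only difference is that you start from the displayed formula for $c^H$ and count the neighbours sharing a coface yourself (each coface contributes $k+1$ of them), whereas the paper starts from the already-reduced form $c^H(\tau)=k+1+\#\lbrace\sigma\succ\tau\rbrace-\#\lbrace\tau'\sim\tau\rbrace$ taken from \cite[Lemma~3.13]{BK}.
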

The first estimate is particularly useful if one is able to control the term $\#\lbrace \tau'\in\Sigma_k\mid\tau'\subseteq\sigma\rbrace.$ It can for example be estimated by $\beta_k+1,$ which leads to the second estimate. This term is always bounded from above by $k+2,$ but for flag complexes one even has \[\#\lbrace \tau'\in\Sigma_k\mid\tau'\subseteq\sigma\rbrace\leq 2\] because otherwise, if $\sigma$ had three or more faces in $\Sigma_k,$ those would form a clique which contradicts $\sigma\in \K_{k+1}\backslash\Sigma_{k+1}.$ The second estimate is sharper than the one obtained in \cite[Theorem~1.1]{lewfaces} about missing faces in $\Sigma,$ which follows from Lemma~\ref{lew}.
The last estimate is a generalization of the corresponding bound for graphs that was obtained by Fiedler, see \cite[3.8]{Fiedler}, that says that the second smallest eigenvalue $\mu(G)$ of the graph Laplacian $L(G)$ of a graph $G$ is bounded from below by \[\mu(G)\geq 2+2\min_{v\in V}\deg(v)-n.\] Recall that $\mu(G)$ is equal to the smallest eigenvalue of the Hodge Laplacian $\Delta^H_0$ (i.e. $k = 0$), if we interpret $G$ as a one dimensional simplicial complex.
\begin{proof}[Proof of Proposition~\ref{curvature}]

Let $\tau,\tau'\in\Sigma_k.$ We write $\tau\sim\tau'$ if both share a common face but have no common coface. This means that $\tau\cap\tau'\prec\tau,\tau'\nprec\tau\cup\tau'.$
It follows from \cite[Lemma 3.13]{BK} that 
\[c^H(\tau) = k+1+\#\lbrace\sigma\succ\tau\rbrace-\#\lbrace\tau'\sim\tau\rbrace.\]
Notice that \[\#\lbrace\tau'\sim\tau\rbrace = \sum_{\sigma\in T_k(\tau)}\sum_{\tau'\in\Sigma_k}1_{\tau'\neq\tau}1_{\tau'\subseteq\sigma} = \sum_{\sigma\in T_k(\tau)}\sum_{\tau'\in\Sigma_k}1_{\tau'\subseteq\sigma}-\#T_k(\tau).\]
Further, observe that 
\begin{equation}\label{Tk}
    \#T_k(\tau) = n-(k+1)-\#\lbrace\sigma\succ\tau\rbrace.
\end{equation}
We use this to obtain
\begin{align*}
    c^H(\tau) = n-\sum_{\sigma\in T_k(\tau)}\sum_{\tau'\in\Sigma_k}1_{\tau'\subseteq\sigma},
\end{align*}
which shows the first statement. The first estimate for $\min\sigma(\Delta^H_k)$ follows directly. The last two estimates follow from (\ref{Tk}) and the estimate $\#\lbrace \tau'\in\Sigma_k\mid\tau'\subseteq\sigma\rbrace\leq \beta_k+1\leq k+2.$
\end{proof}

\section{Alexander Duality}\label{alex}

In this section, we apply the results of the previous sections to the Alexander dual $\Sigma^\ast$ of a given simplicial complex $\Sigma$ and then reinterpret the results in terms of $\Sigma.$ That way, we obtain additional bounds on the spectrum and another vanishing criterion for cohomology classes.\\
Recall that the \textit{Alexander dual} of $\Sigma$ is defined as 
\[\Sigma^* = \lbrace\tau\subseteq V\mid \tau^c\notin\Sigma\rbrace\] and that this is again a simplicial complex.
Duval and Reiner showed in \cite[Corollary~4.7]{DuvRei} that the Hodge Laplacians $\Delta^H_k(\Sigma)$ and $\Delta^H_{n-k-3}(\Sigma^*)$ of $\Sigma$ and $\Sigma^*$ satisfy \[\sigma(\Delta^H_k(\Sigma))\backslash\lbrace n\rbrace = \sigma(\Delta^H_{n-k-3}(\Sigma^*))\backslash\lbrace n\rbrace.\] Notice that this implies \[\min\sigma(\Delta^H_k(\Sigma)) =\min \sigma(\Delta^H_{n-k-3}(\Sigma^*)),\] as both spectra are bounded above by $n.$ We use this fact to derive more bounds on the spectrum.
The following theorem is a consequence of Theorem~\ref{main} applied to $\Sigma^*.$

\begin{theorem}\label{alexestimate}
    Let $\Sigma$ be a simplicial complex with $n = \#V.$ Let $0\leq k\leq\dim(\Sigma)-1$ and assume that $\dim(\Sigma)\leq n-3.$ Then
    \[d_k\mu^H_k+(n-k-2-d_k)n\geq(n-k-2)\mu^H_{k+1}.\]
\end{theorem}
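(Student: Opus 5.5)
The plan is to apply the second estimate of Theorem~\ref{main}, the one for $\mu^H$, to the Alexander dual $\Sigma^*$ in dimension $j = n-k-3$. We then translate every quantity back to $\Sigma$ using the Duval--Reiner identity $\min\sigma(\Delta^H_i(\Sigma)) = \min\sigma(\Delta^H_{n-i-3}(\Sigma^*))$. Under this correspondence:
\begin{itemize}
\item dimension $j=n-k-3$ of $\Sigma^*$ corresponds to dimension $k$ of $\Sigma$;
\item dimension $j-1=n-k-4$ of $\Sigma^*$ corresponds to dimension $k+1$ of $\Sigma$.
\end{itemize}
So Theorem~\ref{main} for $\Sigma^*$ reads
\[(j+1)\mu^H_k(\Sigma) + \beta_j(\Sigma^*)\bigl(n-\mu^H_k(\Sigma)\bigr) \geq (j+1)\mu^H_{k+1}(\Sigma),\qquad j+1=n-k-2.\]
Since $\mu^H_k\le n$ by Lemma~\ref{nId}, the factor $n-\mu^H_k$ is nonnegative. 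Hence we may replace $\beta_j(\Sigma^*)$ by any upper bound $B$. With $B = n-k-2-d_k$ this gives exactly $d_k\mu^H_k + (n-k-2-d_k)n \ge (n-k-2)\mu^H_{k+1}$.

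The key combinatorial step is therefore the bound $\beta_{n-k-3}(\Sigma^*) \le n-k-2-d_k$. Take $\sigma^*\in A_j(\Sigma^*)$. Then $\sigma^*$ has $n-k-1$ vertices and $\sigma^*\notin\Sigma^*$, so its complement $\sigma=(\sigma^*)^c$ is a $(k+1)$-element set lying in $\Sigma$, i.e.\ $\sigma\in\Sigma_k$.

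The $j$-faces of $\sigma^*$ are the sets $\sigma^*\setminus\{w\}$ with $w\notin\sigma$. Such a face lies in $\Sigma^*$ iff its complement $\sigma\cup\{w\}$ is not in $\Sigma$. Hence the number of $j$-faces of $\sigma^*$ in $\Sigma^*$ equals
\[(n-k-1) - \#\{\rho\succ\sigma\} \le n-k-1-d_k.\]
Subtracting one gives $\beta_j(\Sigma^*)\le n-k-2-d_k$. If $A_j(\Sigma^*)=\emptyset$, then $\beta_j=0$ by convention. In that case we need $d_k\le n-k-2$, which holds unless some $k$-simplex has all $n-k-1$ possible cofaces; this degenerate case is handled below.

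Finally, I will check that the index hypotheses of Theorem~\ref{main} hold, namely $1\le j\le\dim(\Sigma^*)$. The lower bound $j\ge 1$ means $k\le n-4$, which follows from $k\le\dim\Sigma-1\le n-4$. The upper bound $j\le \dim\Sigma^*$ requires a $(k+2)$-element set not in $\Sigma$. I also need the Duval--Reiner identity to apply in both dimensions $k$ and $k+1$; the hypothesis $\dim\Sigma\le n-3$ should guarantee the relevant spectra on the dual side are well defined.

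I expect the main obstacle to be the degenerate situation in which every $(k+2)$-set belongs to $\Sigma$. Then $\Sigma^*_j=\emptyset$ and $d_k=n-k-1$, so Theorem~\ref{main} cannot be invoked at index $j$ and the target inequality becomes $(n-k-1)\mu^H_k - n \ge (n-k-2)\mu^H_{k+1}$. In this case I would instead argue directly on $\Sigma$: the $(k+1)$-skeleton is complete, so I would compare with $\K$ via Lemma~\ref{nId}. Alternatively I would adopt the empty-spectrum convention that the paper presumably uses in this case. I would settle this case separately before running the main argument.
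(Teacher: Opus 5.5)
Your route is the paper's. You apply the $\mu^H$-estimate of Theorem~\ref{main} to $\Sigma^*$ at $k'=n-k-3$, transfer both minimal eigenvalues via Duval--Reiner, and evaluate $\beta_{k'}(\Sigma^*)$ by complementation. In the main case the difference is cosmetic. The paper computes $\beta_{k'}(\Sigma^*)=n-k-2-d_k$ exactly, while you use only the upper bound together with $n-\mu^H_k\ge 0$. Your face count in fact gives equality whenever $d_k\le n-k-2$.

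What is missing is the degenerate case, which you flag as the main obstacle but leave unresolved. It is a one-liner. Drop the ``empty-spectrum convention'' alternative: Theorem~\ref{main} simply cannot be invoked when $\Sigma^*_{k'}=\emptyset$. Instead, split on $d_k$.

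\emph{Case $d_k=n-k-1$.} Every $\tau\in\Sigma_k$ has all its cofaces in $\Sigma$, so no $\sigma\in\K_{k+1}\setminus\Sigma_{k+1}$ contains an element of $\Sigma_k$. Hence $R_k=0$, and Lemma~\ref{nId} gives $\Delta^H_k=n\cdot\mathrm{Id}$, so $\mu^H_k=n$. The claim then reads $(n-k-2)n\ge(n-k-2)\mu^H_{k+1}$, which holds because $\mu^H_{k+1}\le n$ and $n-k-2>0$. This is exactly the observation with which the paper opens its proof.

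\emph{Case $d_k\le n-k-2$.} Choose $\tau\in\Sigma_k$ with a missing coface $\sigma$. Then $\sigma^c\in\Sigma^*_{k'}$ and $\tau^c\in A_{k'}(\Sigma^*)$. So Theorem~\ref{main} applies at index $k'$, and the case $A_{k'}(\Sigma^*)=\emptyset$ never arises.

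This split also repairs a slip in your text. The bound $d_k\le n-k-2$ fails only when \emph{every} (not some) $k$-simplex has all $n-k-1$ cofaces. Identifying that situation with ``every $(k+2)$-set lies in $\Sigma$'' would need an extra argument, which the split avoids.
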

\begin{proof}
    Notice that the statement holds if $\mu_k^H = n,$ as $\mu^H_{k+1}\leq n$ is always true. Therefore, we may assume that $\Sigma_{k+1}$ is not complete, because otherwise $\mu^H_k = n.$ This guarantees that $\Sigma^*_{k'}$ for  $k' = n-k-3\geq 1$ is not empty and, therefore, $d_k\leq n-k-2$.\\
    Let   $\widehat\mu^H_{k'}$ and $\widehat\mu^H_{k'-1}$ be the smallest eigenvalues of $\Delta^H_{k'}(\Sigma^*)$ and $\Delta^H_{k'-1}(\Sigma^*).$
    Since $\dim(\Sigma)\leq n-3,$ we find that $\Sigma^*$ and $\Sigma$ have the same set of vertices $V.$
    By Theorem~\ref{main} applied to $\Sigma^*,$ we get     \[(k'+1-\beta^*_{k'})\widehat\mu^H_{k'}+\beta^*_{k'}n\geq(k'+1)\widehat\mu^H_{k'-1},\]
    where $\beta^*_{k'} = \beta_{k'}(\Sigma^*).$ \\
 The rest of the proof is just a translation of the terms involved. As mentioned above, we have $\widehat\mu^H_{k'} = \mu^H_k$ and  $\widehat\mu^H_{k'-1} = \mu^H_{k+1}.$ For the term $\beta^*_{k'}$ we find
 \begin{align*}
     \beta^*_{k'} &= \max_{\tau^c\in \K_{k'+1}\backslash\Sigma^*_{k'+1}}\#\lbrace\sigma^c\in\Sigma^*_{k'}\mid\sigma^c\subseteq\tau^c\rbrace-1\\
     & = \max_{\tau\in\Sigma_k}\#\lbrace \sigma\in A_k\mid\tau\subseteq\sigma\rbrace-1\\
     & = n-k-2-d_k. \end{align*}
      This finishes the proof. \end{proof}
Theorem~\ref{alexestimate} gives another vanishing criterion for $H^k(\Sigma,\mathbb{R}),$ similar to Corollary~\ref{homology}. However, this time it concerns the smallest eigenvalue in the upper dimensions of $\Sigma.$
\begin{corollary}
    Let $d = \dim(\Sigma)$ and $d\geq\ell\geq k+1\geq 1.$ Assume that $\#V = n\geq d+3$ and 
    \[\mu^H_\ell> n\left(1-\prod_{i= 0}^{\ell-k-1}\frac{d_{k+i}}{(n-k-i-2)}\right).\]
    Then \[H^k(\Sigma,\mathbb{R}) = \lbrace 0\rbrace.\]
\end{corollary}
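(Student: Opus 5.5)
The plan is to iterate Theorem~\ref{alexestimate} from dimension $\ell$ down to dimension $k$, in the same way Corollary~\ref{homology} iterates Theorem~\ref{main}, and then to invoke the discrete Hodge theorem $H^k(\Sigma,\mathbb{R})\cong\ker\Delta^H_k$. With that identification, it suffices to show $\mu^H_k>0$.

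\emph{Reformulating the one-step bound.} The assumption $n\geq d+3$ gives $\dim(\Sigma)\leq n-3$, so Theorem~\ref{alexestimate} applies for each $j=k,\dots,\ell-1\leq d-1$. Write $c_j=\frac{d_j}{n-j-2}\in[0,1]$. Subtracting $(n-j-2)n$ from both sides of Theorem~\ref{alexestimate} and dividing by $n-j-2$ gives
\[
c_j(\mu^H_j-n)\geq \mu^H_{j+1}-n .
\]
This is the convenient form, since both sides are now measured from $n$.

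\emph{Iterating.} Set $x_j=n-\mu^H_j\geq 0$, using $\mu^H_j\leq n$ from Lemma~\ref{nId}. The one-step bound reads $x_{j+1}\geq c_jx_j$. Chaining it from $j=k$ to $j=\ell-1$ gives
\[
x_\ell\geq \Big(\prod_{i=0}^{\ell-k-1}c_{k+i}\Big)x_k ,
\]
that is,
\[
n-\mu^H_\ell\geq P\,(n-\mu^H_k),\qquad P=\prod_{i=0}^{\ell-k-1}\frac{d_{k+i}}{n-k-i-2}.
\]
Chaining is legitimate because every $c_j\geq 0$.

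\emph{Concluding.} The hypothesis says $n-\mu^H_\ell<nP$. If $P>0$, we get $P(n-\mu^H_k)<nP$, hence $\mu^H_k>0$ and therefore $H^k(\Sigma,\mathbb{R})=\lbrace0\rbrace$. If $P=0$, the hypothesis reads $\mu^H_\ell>n$, which is impossible, so the statement holds vacuously.

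\emph{Main obstacle.} The delicate point is the degenerate case inside Theorem~\ref{alexestimate} where $\Sigma_{j+1}$ is complete, since then $n-j-2-d_j$ might be negative. Theorem~\ref{alexestimate} is nevertheless stated unconditionally for $0\leq j\leq\dim(\Sigma)-1$, so I will use it as stated. A second check is that $n-j-2>0$ for all $j\leq \ell-1\leq d-1$; this holds because $n\geq d+3$.
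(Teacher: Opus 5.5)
Your argument is correct and is essentially the paper's proof: your chained bound $n-\mu^H_\ell\geq P\,(n-\mu^H_k)$ from iterating Theorem~\ref{alexestimate} is a rearrangement of the paper's inequality $\mu^H_k P\geq \mu^H_{\ell}-n(1-P)$, after which $\mu^H_k>0$ and the Hodge theorem finish the argument, and your explicit handling of the case $P=0$ is a nice addition. One harmless inaccuracy is the claim $c_j\leq 1$, which can fail because only $d_j\leq n-j-1$ is guaranteed (equality holds exactly when $\Sigma_{j+1}$ is complete, and then $\mu^H_j=n$), but your chaining and conclusion use only $c_j\geq 0$, so nothing breaks.
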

\begin{proof}
    By using Theorem~\ref{alexestimate} inductively, one shows that for all $1\leq \ell'\leq d-k$
    \[\mu^H_k\prod_{i = 0}^{\ell'-1}\frac{d_{k+i}}{(n-k-i-2)}\geq \mu^H_{k+\ell'}-n\left(1-\prod_{i = 0}^{\ell'-1}\frac{d_{k+i}}{(n-k-i-2)}\right).\] 
    For $\ell = \ell'+k$, we conclude that $\mu^H_k >0.$ Thus, $H^k(\Sigma,\mathbb{R})\cong\ker\Delta^H_k = \lbrace0\rbrace.$
\end{proof}
Finally, we derive another lower bound on $\sigma(\Delta_k^H)$ by considering the curvature $c^H$ of the Alexander dual $\Sigma^*,$ instead of the curvature of $\Sigma$ itself.
\begin{proposition}
   Assume that $n = \#V$ and $0\leq k\leq\dim(\Sigma)\leq n-2.$ Then,
    \[\mu^H_k\geq n-\max_{\sigma\in \K_{k+1}\backslash\Sigma_{k+1}}\sum_{\tau\subseteq\sigma,\tau\in\Sigma_k}\left(n-k-1-\#\lbrace\sigma'\succ\tau\rbrace\right).\]
\end{proposition}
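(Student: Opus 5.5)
The plan is to apply the first statement of Proposition~\ref{curvature} to the Alexander dual $\Sigma^*$ in dimension $k' = n-k-3$, and then translate back via the Duval--Reiner identity $\min\sigma(\Delta^H_k(\Sigma)) = \min\sigma(\Delta^H_{k'}(\Sigma^*))$.

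\emph{Degenerate cases.} If $k'<0$, the statement must be handled separately. Likewise, if $\Sigma^*_{k'}$ is empty, then $\Sigma_{k+1}$ is complete and $\mu^H_k = n$, so the bound holds trivially because the subtracted sum is nonnegative. I also need $\Sigma^*$ to have vertex set $V$, so that the constant $n$ in Proposition~\ref{curvature} is the same for $\Sigma^*$. This holds when $\dim(\Sigma)\leq n-3$. In the borderline case $\dim(\Sigma)=n-2$, I would either check directly that the formula still holds, or argue that the proposition for $\Sigma^*$ is applied with ambient vertex set $V$. The identity $c^H = n - \sum(\cdots)$ in Proposition~\ref{curvature} is derived relative to the complete complex on $V$, so it remains valid even if some vertices of $V$ are not in $\Sigma^*$, provided one treats $V$ as the ambient set. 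I expect that justification to work.

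\emph{Main step.} Proposition~\ref{curvature} for $\Sigma^*$ gives
\[\mu^H_{k'}(\Sigma^*) \geq n - \max_{\eta\in\Sigma^*_{k'}} \sum_{\xi\in T_{k'}(\eta)} \#\lbrace \eta'\in\Sigma^*_{k'} \mid \eta'\subseteq\xi\rbrace.\]
I translate each term with complements:
\begin{itemize}
\item $\eta\in\Sigma^*_{k'}$ means $\eta=\sigma^c$ with $\#\sigma = k+2$ and $\sigma\notin\Sigma$, i.e.\ $\sigma\in\K_{k+1}\setminus\Sigma_{k+1}$.
\item $\xi\supseteq\eta$ with $\#\xi=\#\eta+1$ and $\xi\notin\Sigma^*$ means $\xi=\tau^c$ with $\tau\subseteq\sigma$, $\#\tau=k+1$, and $\tau\in\Sigma$. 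So $T_{k'}(\eta)$ corresponds bijectively to $\lbrace\tau\in\Sigma_k \mid \tau\subseteq\sigma\rbrace$.
\item For each such $\tau$, the sets $\eta'\in\Sigma^*_{k'}$ with $\eta'\subseteq\tau^c$ are $\eta'=\sigma'^c$ with $\sigma'\supseteq\tau$, $\#\sigma'=k+2$, and $\sigma'\notin\Sigma$.
\end{itemize}
The number of such $\sigma'$ is the number of $(k+1)$-sets containing $\tau$ minus the number of cofaces of $\tau$ in $\Sigma$, namely $n-k-1-\#\lbrace\sigma'\succ\tau\rbrace$. This is exactly equation~\eqref{Tk}.

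Substituting these counts, the maximum over $\eta$ becomes the maximum over $\sigma\in\K_{k+1}\setminus\Sigma_{k+1}$ of $\sum_{\tau\subseteq\sigma,\,\tau\in\Sigma_k}(n-k-1-\#\lbrace\sigma'\succ\tau\rbrace)$, which is the claimed bound.

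The main obstacle is the bookkeeping of complements and dimensions, namely $\#\eta = n-k-2 = k'+1$. The other delicate point is the edge cases $k'=n-k-3<0$ and $\dim\Sigma = n-2$. When $k=n-2$ the Hodge Laplacian is that of the complete top dimension, and I would check those cases by hand using Lemma~\ref{nId}.
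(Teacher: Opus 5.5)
Your proposal is correct and follows the paper's proof. Both apply the identity $c^H=n-\sum_{T}\#\lbrace\cdots\rbrace$ of Proposition~\ref{curvature} to $\Sigma^*$ in degree $n-k-3$, identify $T^*_{n-k-3}(\sigma^c)$ with $\lbrace\tau\in\Sigma_k\mid\tau\subseteq\sigma\rbrace$ and the inner counts with $n-k-1-\#\lbrace\sigma'\succ\tau\rbrace$ via \eqref{Tk}, and transfer the bound through the Duval--Reiner equality of smallest eigenvalues.

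Your handling of the boundary cases is more careful than the paper, which passes over them. This covers the ambient-$V$ reading of Proposition~\ref{curvature} when $\dim(\Sigma)=n-2$, the case of complete $\Sigma_{k+1}$, and your plan to check $k=n-2$ directly via Lemma~\ref{nId}; that plan works, since both sides equal $n-\#\Sigma_{n-2}$. One slip to fix: $\sigma'$ ranges over $(k+2)$-element supersets of $\tau$, not $(k+1)$-element ones.
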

\begin{proof}
    This is essentially an application of Proposition~\ref{curvature} to $\Sigma^*$ and using the identity between the spectra from above. The assumption $\dim(\Sigma)\leq n-2$ ensures that $\Sigma$ and $\Sigma^*$ have the same set of vertices $V.$\\
    Consider some $\sigma\in \K_{k+1}\backslash\Sigma_{k+1}.$ Then $\sigma^c = V\backslash\sigma\in\Sigma^*_{n-k-3}.$ Let 
    \[\tau^c\in T^*_{n-k-3}(\sigma^c) = \lbrace\tilde\tau^c\in \K_{n-k-2}\backslash\Sigma^*_{n-k-2}\mid\sigma^c\subseteq\tilde\tau^c\rbrace. \]
    Then, $\tau\subseteq\sigma$ and $\tau\in\Sigma_k.$
    Now, let $c^H_*(\sigma^c)$ be the Forman curvature with respect to $\Sigma^*.$ Then, according to Proposition~\ref{curvature}
\begin{align*}
    c^H_*(\sigma^c) &= n-\sum_{\tau\subseteq\sigma,
\tau\in\Sigma_k}\#\lbrace\tilde\sigma^c\in\Sigma^*_{n-k-3}\mid\tilde\sigma^c\subseteq\tau^c\rbrace \\
&= n-\sum_{\tau\subseteq\sigma,\tau\in\Sigma_k}\#\lbrace\tilde\sigma\in \K_{k+1}\backslash\Sigma_{k+1}\mid\tau\subseteq\tilde\sigma\rbrace\\
& = n-\sum_{\tau\subseteq\sigma,\tau\in\Sigma_k}(n-k-1-\#\lbrace\tilde\sigma\in\Sigma_{k+1}\mid\tau\subseteq\tilde\sigma\rbrace).
\end{align*}

The proof follows because \[\min_{\sigma^c\in\Sigma^*_{n-k-3}}c^H_*(\sigma^c)\leq\min\sigma(\Delta^H_{n-k-3}(\Sigma^*)) = \min\sigma(\Delta^H_k(\Sigma)).\hfill\qedhere\]
\end{proof}
\textbf{Acknowledgements.} The authors appreciate the financial support of the DFG and MK enjoyed the hospitality of the IIAS Jerusalem. PB thanks Lior Tenenbaum for discussions about Alexander duality and other topics relevant to the paper.

\bibliographystyle{abbrv}
\bibliography{literature}

\end{document}